\documentclass[12pt,a4paper,reqno]{amsart}

\usepackage{amsmath,amssymb,amscd,mathrsfs,mathtools,amsthm}
\usepackage[dvipsnames,svgnames,table]{xcolor}
\usepackage{tikz} 
\usepackage{blkarray}
\usepackage{url}
\usepackage{graphicx}
\usepackage{setspace}  
\usepackage{ytableau}
\ytableausetup{centertableaux,boxsize = 1.3em}
\usepackage{tikz-cd}

\usepackage{cases}
\usepackage{pifont} 

\usepackage{nicematrix,tikz}
\usepackage{subcaption}
\usepackage{bm}
 \usepackage{booktabs} 
\definecolor{blond}{rgb}{0.92, 0.89, 0.80}
\definecolor{champagne}{rgb}{0.97, 0.91, 0.81}

\definecolor{GreenDef}{RGB}{27, 132, 5}
\definecolor{BlueDef}{RGB}{11, 78, 188}

\definecolor{RedDef}{RGB}{15,77,146}
\definecolor{Yblue}{RGB}{160, 4, 23}

\definecolor{ThmDef}{RGB}{160, 4, 23}

\usepackage{titletoc}
\usepackage{titlesec}

\titleformat{\section}[hang]
{\scshape\small\color{RedDef}\filcenter}{\S\ \thesection.}{0.2em}{}

\titleformat{\subsection}[runin]
{\scshape\color{RedDef}}{\thesubsection.}{0.2em}{}[.]

\contentsmargin{2.55em}
\dottedcontents{section}[3.8em]{}{1.5em}{1pc}
\dottedcontents{subsection}[6.1em]{}{2.5em}{1pc}
\dottedcontents{subsubsection}[8.1em]{}{3.2em}{1pc}

\usepackage{color}

\usepackage[breaklinks,colorlinks,pagebackref]{hyperref} 

\definecolor{Red}{RGB}{160, 4, 23}
\definecolor{Green}{RGB}{27, 132, 5}
\definecolor{Blue}{RGB}{11, 78, 188}

\hypersetup{linkcolor=Red,citecolor=Green}

\usepackage[capitalise,nameinlink]{cleveref}
\crefformat{equation}{(#2#1#3)}

\newcommand{\GL}{\mathrm{GL}}

\newcommand{\Lift}{\mathrm{Lift}}
\newcommand{\Comp}{\mathrm{Comp}}

\theoremstyle{plain}

\newtheorem{theorem}{Theorem}[section]
\newtheorem{lemma}[theorem]{Lemma}
\newtheorem{proposition}[theorem]{Proposition}

\theoremstyle{definition}

\newtheorem{definition}[theorem]{Definition}
\newtheorem{remark}[theorem]{Remark}
\newtheorem{example}[theorem]{Example}

\newcommand{\marb}{\mathsf{Mar}^b}
\newcommand{\marw}{\mathsf{Mar}^w}
\newcommand{\mar}{\mathsf{Mar}^{bw}}
 
\usepackage{dsfont}

\usepackage[top=2cm,bottom=2cm,right=1.9cm,left=1.9cm]{geometry}

\newcommand{\psum}{\mathsf{sum}}

\usepackage{multicol}

\usepackage{xcolor}

\newcommand{\E}{\mathcal{E}}

\newcommand{\Z}{\mathcal{Z}}

\newcommand{\B}{\mathbf{B}}
\renewcommand{\L}{\mathcal{L}}

\newcommand{\U}{\mathcal{U}}

\newcommand{\best}{\textrm{best}}
\newcommand{\worst}{\textrm{worst}}

\newlength{\originalbase}
\newcommand{\RR}{{\mathbb{R}}}

\newcommand{\e}{\mathsf{e}}
\renewcommand{\k}{\mathsf{k}}

\newcommand{\V}{\mathcal{V}}

\renewcommand{\P}{\mathbb{P}}

 \renewcommand{\P}{\mathbb{P}}
\newcommand{\BW}{\mathbf{BW}}
\newcommand{\one}{\mathbf{1}}

\newcommand{\chb}{\mathsf{Choice}^b}
\newcommand{\chw}{\mathsf{Choice}^w}
\newcommand{\ch}{\mathsf{Choice}^{bw}}

\newcommand{\pup}{\delta^{\uparrow}}
\newcommand{\pdn}{\delta^{\downarrow}}

\title{On the Dimension of the Best--Worst Choice Polytope}

\author{Keivan Mallahi-Karai}
\address{Constructor University, School of Science, Campus Ring I, 28759 Bremen}
\email{kmallahikarai@constructor.university}

\author{Majid Salamat}
\address{Constructor University, School of Science, Campus Ring I, 28759 Bremen}
\email{msalamat@constructor.university}

\begin{document}

\keywords{Best-choice probabilities, best-worst choice polytope, random utility model, multiple-choice polytope}

\begingroup
\def\uppercasenonmath#1{}
\let\MakeUppercase\relax
\maketitle
\endgroup

{
\footnotesize
\hypersetup{linkcolor=RedDef}
}

 
\begin{abstract}
For a set of $n$ alternatives, the best--worst choice polytope is the convex hull of the deterministic best--worst choice vectors induced by all linear rankings of the alternatives. The question of determining the dimension of this polytope as 
a function of $n$ has been raised by Doignon \cite{Doignon2023}. In this paper we answer this question by finding an explicit formula for this dimension. Our proof is motivated by a representation-theoretic interpretation of the relevant rank problem for the symmetric group, but is presented entirely in elementary linear-algebraic terms. The connection with representation theory will be explored elsewhere.
\end{abstract}


\allowdisplaybreaks

\tableofcontents

\section{Introduction}\label{sec:intro}

Random utility models encode stochastic choice by placing a probability distribution on rankings of a finite set of alternatives. This point of view goes back at least to Block and Marschak \cite{BlockMarschak1960}. For ordinary best-choice probabilities, Falmagne \cite{Falmagne1978} obtained a fundamental representation theorem in terms of the non-negativity of the Block--Marschak polynomials. The geometry of the corresponding prediction range, often called the multiple-choice polytope, has subsequently been studied from several points of view.

The best--worst paradigm records more information: from every offered set one observes simultaneously the most preferred and the least preferred alternative. Probabilistic models for this setting were developed, among others, by Marley and Louviere \cite{MarleyLouviere2005}. Colonius \cite{Colonius2021} introduced best--worst analogues of the Block--Marschak polynomials and investigated the associated random utility representation problem. Doignon \cite{Doignon2023} subsequently showed, in particular, that the best--worst Block--Marschak inequalities are not sufficient once four or more alternatives are present; see also the later technical note of Colonius \cite{Colonius2024}. From the polyhedral point of view, Doignon gave a complete analysis for four alternatives. In particular, he showed that the corresponding polytope has $24$ vertices and  dimension $22$, and obtained a complete facet description in that case. An alternative approach to this problem is presented in \cite{CMKS2024}. 

The purpose of this paper is to establish a closed formula for the dimension of best-worst choice polytope for an arbitrary number of alternatives. Our proof is inspired by ideas related to the representation theory of symmetric groups, it is however presented in an elementary fashion. We only use basic tools from linear algebra and combinatorics. It is also worthwhile to mention that best--worst Block--Marschak inequalities do not appear to play an important role here. 

Let us start by setting some basic notation.  Let $n \ge 3$ be an integer, write $[n]:=\{1,\dots,n\}$, and denote by $\Pi_n$ the set of all rankings of $[n]$. More generally, if $A\subseteq[n]$, we denote by $\Pi_A$ the set of all words in which every element of $A$ occurs exactly once; in particular, $\Pi_\emptyset$ consists of the empty word. A ranking $\pi=(\pi(1),\dots,\pi(n))\in\Pi_n$ can be viewed as a total order $\prec_\pi$ on $[n]$ defined by
$$ \pi(1)\prec_\pi\cdots\prec_\pi\pi(n). $$ We write $a\preceq_\pi b$ if either $a=b$ or $a\prec_\pi b$, and say that $a$ precedes $b$ in $\pi$. 

\medskip
A probability vector on $\Pi_n$ is a function $p:\Pi_n\to[0,1]$ satisfying $\sum_{\pi\in\Pi_n}p(\pi)=1$. We write $p_\pi:=p(\pi)$ and denote the set of all probability vectors on $\Pi_n$ by $\Delta_n$. Each element of $\Delta_n$ can be viewed as an element of the real vector space $\RR^{\Pi_n}$. Thus the set of all such probability vectors 
$$ \Delta_n:=\left\{p=(p_\pi)_{\pi\in\Pi_n}:\sum_{\pi\in\Pi_n}p_\pi=1,\quad p_\pi\ge 0\text{ for all }\pi\in\Pi_n\right\} $$
define a simplex in $\RR^{\Pi_n}$ whose vertices are elements of $\Pi_n$. 

An event $E$ is simply a subset of $\Pi_n$. Given a probability vector $p\in\Delta_n$, the probability of $E$ with respect to $p$ is defined by
$ \P^p(E)=\sum_{\pi\in E}p_\pi. $

\medskip
\paragraph{\it Best-Worst-choice probabilities.} Let
\[ \Theta_n= \{(a,b,B): B\subseteq[n],\ |B|\geq 2,\ a,b\in B,\ a\neq b\} \]
denote the set of all subsets $B$ of $[n]$ with a choice of distinct elements $a, b \in B$. 

For $(a,b,B) \in\Theta_n$, consider the probability under $p \in \Delta_n$ of the event that $a$ is the best element and $b$ is the worst element of $B$ for a random permutation $\pi$ sampled according to $p$. In other words, set
\begin{equation}\label{def:pan}
\P^p_{a,b, B}:=\P\big[\{\pi\in\Pi_n:a\preceq_\pi x \preceq_\pi  b\text{ for all } x\in B\}\big]
=\sum_{\substack{\pi\in\Pi_n:\\a\preceq_\pi x \preceq_\pi   b,\ \forall x\in B}}p_\pi.
\end{equation}
For instance, for $n=4$ and $B=\{1,2, 3\}$, we have $\P_{1,2, \{1,2,3\}}=p_{4132}+p_{1432}+p_{1342}+ p_{1324}$. Each $(\P^p_{a,B})_{(a,b,B)\in\Theta_n}$ satisfies the normalization conditions
\begin{equation}\label{normal}
\sum_{a, b \in B, a \neq b }\P_{a,b, B}=1,\qquad\text{for every }B\subseteq[n] \text{ with } |B| \ge 2.
\end{equation}

One thus obtains a map from $\RR{\Pi_n}$ to $\RR^{\Theta_n}$ mapping $p$ to $(\P^p_{a,b, B})_{(a,b,B) \in \Theta_n}$. The image $\Delta_n$ under this map is the so-called best-worst choice polytope in  $\RR^{\Theta_n}$, denoted by $\BW_n$. We determine the dimension of this polytope. 

\medskip

\begin{theorem}\label{thm:main}
For every $n\geq 2$, the dimension of the best--worst choice polytope on $n$ alternatives is given by 
$$ \dim \BW_n = n(n-3)2^{n-2}+\binom{n}{2}-\binom{n}{3}+n.  $$
\end{theorem}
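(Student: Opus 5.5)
Throughout, let $M:\RR^{\Pi_n}\to\RR^{\Theta_n}$ be the linear map $p\mapsto(\P^p_{a,b,B})$. The plan is to reduce the statement to a rank computation for $M$ and then bound that rank from both sides.

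\emph{Reduction.} Since $\BW_n=M(\Delta_n)$, and the vertices of $\Delta_n$ span $\RR^{\Pi_n}$, the affine hull of $\BW_n$ is $M$ applied to the affine hull of $\Delta_n$. By the normalization conditions \eqref{normal}, the functional $p\mapsto\sum_\pi p_\pi$ factors through $M$. Hence
$$\dim\BW_n=\operatorname{rank}M-1 .$$
So we must show that the span $V_n\subseteq\RR^{\Pi_n}$ of the indicator functions
$$f_{a,b,B}(\pi)=\one[a \text{ first and } b \text{ last among } B \text{ in } \pi]$$
has dimension $r_n:=n(n-3)2^{n-2}+\binom n2-\binom n3+n+1$. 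As sanity checks, $r_3=6=|\Pi_3|$, so $V_3$ is everything, and $r_4=23$, matching Doignon's value $22$.

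\emph{Upper bound.} The number of triples is $|\Theta_n|=\sum_k\binom nk k(k-1)=n(n-1)2^{n-2}$. So we need a space of linear relations among the $f_{a,b,B}$ of dimension
$$|\Theta_n|-r_n=n2^{n-1}-\binom n2+\binom n3-n-1 .$$
I would build these relations explicitly from three families.
\begin{itemize}
\item The normalizations $\sum_{a\ne b}f_{a,b,B}=\one$, one for each $B$. Modulo the constant, these give $2^n-n-2$ relations.
\item Relations coming from the best-choice and worst-choice marginals. The functions $g^{\best}_{a,B}=\sum_b f_{a,b,B}$ and $g^{\worst}_{b,B}=\sum_a f_{a,b,B}$ satisfy the classical Block--Marschak type dependencies: the best-choice span alone has dimension $(n-2)2^{n-1}+1$. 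One also has mixed identities; for $|B|=2$ we get $f_{a,b,\{a,b\}}=g^{\best}_{a,\{a,b\}}=g^{\worst}_{b,\{a,b\}}$.
\item The corrections $-\binom n2+\binom n3$. I expect these to come from pairs and triples, where best and worst information coincide or overlap.
\end{itemize}
The goal is a family of size exactly $|\Theta_n|-r_n$. For this count to hold, the relations must be organized one per pair $(x,B)$ with $x\in B$, up to these low-order corrections. The concrete guess is a "first and last" telescoping relation obtained by inserting or deleting one element $x$ from $B$ and summing over the position of $x$.

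\emph{Lower bound.} To show $\dim V_n\ge r_n$, I would pick a candidate basis: the $f_{a,b,B}$ with, say, $a\neq\min B$ or $b\neq\max B$ excluded appropriately, plus small-set exceptions. I would then prove its independence by a triangularity argument. For each chosen triple I would exhibit a test vector in $\RR^{\Pi_n}$, namely a signed combination of permutations in which $a$ and $b$ sit at the ends of $B$ and the elements outside $B$ are placed to kill all larger sets. This test vector should pair nontrivially with that triple and vanish on all triples earlier in a suitable order, for instance ordering by $|B|$ decreasing and then lexicographically. Equivalently, one can induct on $n$: restrict to permutations in which element $n$ is placed first (or last) and compare with $V_{n-1}$. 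This should yield a recursion $r_n=2r_{n-1}+(\text{explicit term})$, and I would check it against the closed formula.

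\emph{Main obstacle.} I expect the hard step to be matching the two bounds exactly, i.e. proving that the proposed relations are themselves linearly independent, or equivalently that the candidate basis is independent. The $\binom n2-\binom n3+n$ correction signals delicate overlaps among relations for $|B|\le3$. My first attempt would be the inductive route: decompose $V_n$ according to whether $n\in B$, and compute the dimension of the intersection of the two pieces directly, using the known best-choice dimension as a black box.
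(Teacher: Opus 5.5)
Your reduction $\dim\BW_n=\operatorname{rank}M-1$ is correct, and so is the count $|\Theta_n|-r_n=n2^{n-1}-\binom{n}{2}+\binom{n}{3}-n-1$. Beyond that, however, the proposal is only a plan and proves neither inequality. No relation is actually written down. The candidate basis is never specified (``excluded appropriately, plus small-set exceptions''), no test vector is constructed, and the recursion for $r_n$ is not derived.

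The two halves are also not ``equivalent''. A family of $|\Theta_n|-r_n$ relations gives $\operatorname{rank}M\le r_n$ only once you prove the relations independent. Independence of a candidate basis gives the opposite inequality. You need both, and neither is supplied. (Minor: $(n-2)2^{n-1}+1$ is the dimension of the best-choice \emph{polytope}; the span of the best-choice functions has dimension $(n-2)2^{n-1}+2$.)

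What is missing is a local analysis for each set $B$, together with a mechanism that decouples different sets.

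\textbf{Filtration.} Filter by $\V_{\le k}$, the span of best--worst functions of sets of size at most $k$. A $k$-set $B$ then contributes $\E_B/(\E_B\cap\L_B)$, where $\L_B$ is spanned by functions of rankings of proper subsets of $B$.

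\textbf{Relations from subsets.} Your ``one relation per $(x,B)$'' is right in spirit, but it is not a one-element insertion/deletion telescoping. It is the inclusion--exclusion identity $u(\pi_1)+(-1)^k u(\pi_k)=\sum_{\emptyset\ne C\subsetneq B}(-1)^{|C|+1}u(\worst_\pi(C))$. Together with constants when $k$ is odd, this yields a $k$-dimensional subspace of $\E_B$ lying in $\sum_{C\subsetneq B}\E_C$.

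\textbf{The hard step.} Your outline does not touch the fact that for $k\ge4$ there is nothing more, i.e.\ $\dim\E_B/(\E_B\cap\L_B)=k(k-2)$. The paper proves this dually: the marginals $\mar(h)$, $h\in\L_B^\perp$, fill the whole $k(k-2)$-dimensional space cut out by total sum zero and $\psum_1+(-1)^k\psum_2=0$. This requires explicit elements of $\L_B^\perp$:
signed functions supported on rankings $\pup_{C_1}\bullet(p,q)\bullet\pdn_{C_2}$, which realize best marginals $\e_p-\e_q$; and
products $h(\sigma\bullet\tau)=h_A(\sigma)h_C(\tau)$ over a split $B=A\sqcup C$, which realize $\e_{a,c}+\e_{b,d}-\e_{a,d}-\e_{b,c}$.

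\textbf{Decoupling different $k$-sets.} To decouple different $k$-sets modulo $\V_{\le k-1}$, compose with $\iota_A(\pi)=\pi\bullet\rho$ for a fixed $\rho\in\Pi_{A^c}$. Every best--worst function of a set $B\neq A$ with $|B|\le|A|$ then becomes a function of the ranking of the proper subset $B\cap A$ of $A$; when $B\not\subseteq A$, its worst element is frozen inside $\rho$. Elements of $\L_A^\perp$ transported to rankings $\pi\bullet\rho$ are exactly the test vectors you were looking for. They annihilate all other sets of size at most $|A|$, but not supersets of $A$. So with them the triangular order runs by increasing $|B|$, not decreasing as you propose.

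\textbf{The correction terms.} The exceptional local values $1$ for $|B|=2$ and $2$ for $|B|=3$, in place of $k(k-2)=0$ and $3$, together with discarding the $k=1$ term $-n$, are precisely what convert $\sum_k k(k-2)\binom{n}{k}=n(n-3)2^{n-2}$ into the stated formula. This explains the $\binom{n}{2}-\binom{n}{3}+n$ correction you flagged but could not account for.
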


\medskip

\noindent
{\bf Strategy of the proof.}
Let us begin by outlining the proof strategy. The dimension of the Best–Worst polytope is determined by the rank of a matrix whose rows  and columns are indexed, respectively, by $\Theta_n$ and $\Pi_n$. This point of view was used in  \cite{SMK2026} to give another proof for a theorem of Doignon and Saito \cite{DS} for the dimension of the best choice polytope. Rather than working directly with matrices, as in \cite{SMK2026}, it will be more convenient to use the language of functions on the various sets such as $\Pi_n$, $\Theta_n$, etc.

To compute the dimension of the relevant function space, we introduce a filtration by subspaces by subspaces $\V_{\le k}$, where $\V_{\le k}$ is generated by the best-worst choice functions corresponding to sets $B$ with $|B|\leq k$; see \eqref{def:vk}. This yields the nested sequence of subspaces in \eqref{filt} and reduces the problem to computing the dimensions of the successive quotients $\V_{\le k}/\V_{\le k-1}$. Each set $B$ with $|B|=k$ contributes a subspace to this quotient, all of the same dimension. These subspaces, however, need not form a direct sum. Determining the dimension of each individual contribution and analyzing the intersections among them constitute the main part of the proof. In order to facilitate reading the paper, we will first provide a proof along the same lines for the aforementioned theorem of Doignon and Saito \cite{DS}. This proof is technically simpler, but highlights the key steps of the proof of Theorem \ref{thm:main}. 

The principal advantage of working with function spaces rather than directly with row spaces is that the former carry a natural inner product, making arguments involving orthogonality particularly transparent. Nevertheless, the entire proof could, in principle, be reformulated in terms of the row space of the matrix $B$. For example, Proposition \ref{prop:choice-filtration} admits a matrix-theoretic interpretation in terms of the number of new pivots contributed by a given block when the matrix is arranged in an appropriate block form. 

\medskip 

\noindent
{\bf Connection to representation theory.}
The problem of determining the rank associated with the polytope can also be formulated in terms of the representation theory of the symmetric group. Let $S_n$ denote the group of all permutations of $[n]$. Recall that a linear representation of a group $G$ on a vector space $V$ is a group homomorphism $\rho\colon G\to\GL(V)$, where $\GL(V)$ denotes the group of invertible linear transformations of $V$.

In the special case $G=S_n$, the theory dates back to the pioneering work of Frobenius in 1896 \cite{Frobenius}, whose aim was to extend Fourier analysis to the setting of non-abelian groups. The irreducible representations of $S_n$ are naturally indexed by the partitions of $n$. The natural action of $S_n$ on the set of rankings induces a permutation representation, which can be decomposed into a direct sum of irreducible representations. This makes it possible to recast the combinatorial problem of computing the rank as the problem of determining the dimension of an appropriate representations of $S_n$. From this perspective, the proof developed here amounts to an analysis of one such representation. This connection will be explored in a companion paper.

\subsection{Acknowledgements}
This work was supported by the German Research Foundation (DFG) under grant no. MA 6503/1-1. 


\section{Notation and terminology}\label{notation}
In this section, we will introduce some additional notation that will be used in the rest of the paper. 

\medskip

\noindent
{\it Rankings of a finite set and their combinatorics}. 
Let $X$ be a finite set. A ranking of $X$ is a total linear order on $X$.
We write $\Pi_X$ for the set of all rankings of $X$. An element $\pi \in \Pi_X$ will be represented as 
$\pi=(\pi_1,\ldots,\pi_k)$, where $k=|X|$.   Viewed in the setting of dynamic choice theory, one can regard $\pi$ as a ranking of $n$ alternatives. 
Hence, we refer to $\pi_1$ as the {\it best} (or most preferred) and $\pi_k$ the {\it worst} (or the least preferred) alternative. It is clear that if $X$ has $n$ elements, then $\Pi_X$ has $n!$ elements. 

Let $\pi \in \Pi_X$. The restriction of $\pi$ to $B$ is ranking of $B$ obtained by taking the restriction of the partial order associated with $\pi $ to $B$. 
 We denote this ranking by $\pi|_B$. For instance, for $\pi= (2,4,1,3,5) \in \Pi_5$ and $B= \{ 1,2,3 \}$, 
we have $\pi|_B= ( 2,1,3)$.  The {best} element and the {worst} element of  $B$ under $\pi$ are simply $(\pi|_B)_1$ and $(\pi|_B)_k$, where $k=|B|$.  We introduce some basic calculus on rankings that will be used in the sequel. 

\begin{itemize}
\item For $\pi \in \Pi_X$, and $ a \in X$, we write $ \pi \setminus a$ for the element of $\Pi_{X \setminus \{ a \} }$ obtained by dropping $a$ from the ranking. For instance, for $ \pi=(3,4,1,2) \in \Pi_{4}$ we have $ \pi \setminus 4 = (3,1,2)$. 

\item Let $X_1$ and $X_2$ be disjoint sets and $ \pi_1 \in \Pi_{X_1}$ and $\pi_2 \in \Pi_{X_2}$, then we write $ \pi_1 \bullet \pi_2$ for the ranking of $X_1 \cup X_2$ in which all elements of $X_1$ precede all elements of $X_2$. Viewed as a partial order on $X_1 \sqcup X_2$, this corresponds to the lexicographic order in which all elements of $X_1$ precede all elements of $X_2$.  For instance, let $X_1 = \{ 1,2, 3 \}, X_2= \{ 4, 5 \}$. If $\pi_1= (2,3,1)$, $\pi_2= (5,4)$, then $\pi_1 \bullet \pi_2= (2,3,1,5,4)$. 

\item For every $ C \subseteq [n]$, we write $\pup_C$ for the increasing and $\pdn_C$ for the decreasing ranking of $C$. For instance, if $C= \{ 2,5,4 \}$, then 
$\pup_C= (2,4,5)$ and $\pdn_C= ( 5,4,2)$. 

\end{itemize}

\medskip

\noindent
{\it Vector space of functions on a set.}
When $X$ is a finite set, the set of all maps from $X$ to $\RR$ will be denoted by $\RR^X$. It is clear that $\RR^X$ is a vector space of dimension $|X|$, the cardinality of $X$. The indicator function of a subset $A \subseteq X$ maps all elements of $A$ to  $1$ and all elements of $A^c$ to $0$. We denote this function by $\one_A$. The special case $A=X$, this is the constant function $1$ on $X$ in which case we simply write $\one$ instead of $\one_X$.  For each $x \in X$, 
we write $\e_x= \one_{\{ x \} }$ for the function from $X$ to $\RR$ that is $1$ on $x$ and $0$ elsewhere. It is clear that $\{ \e_x \}_{x \in X }$ forms a basis for $ \RR^X$.
We refer to this basis as the canonical basis of $\RR^X$.

\medskip

\noindent{\it Linear structure.}
Let us first recall some basic facts from linear algebra. For a family $\{ W_i: i \in I \}$ of linear 
subspaces of a vector space $W$, we write $\sum_{i \in I}$ for the subspace of $W$ spanned by all $W_i$, $i \in I$. We say that 
$\sum_i W_i$ is a direct sum if every element of $  \sum_i W_i$ has a unique representation as $\sum_i w_i$ with $w_i \in W_i$. This is equivalent to the property that if $\sum_i w_i =0$ with $w_i \in W_i$ then $w_i=0$ for all $i \in I$. We indicate this by writing $W= \bigoplus_{i} W_i$. Note that this implies that
$\dim W= \sum_i \dim W_i$. 

\medskip

\noindent
 We equip the real vector space $\RR^X$ with the inner product 
\begin{equation}\label{def:inner}
\langle f, g \rangle_X:= \sum_{x \in X} f(x) g(x), \quad f, g \in \RR^X. 
\end{equation}
When $X$ is clear from the context we drop the subscript $X$ and denote the inner product by $ \langle \cdot , \cdot \rangle$. 
If $W$ is a linear subspace of $\RR^X$, its orthogonal complement with respect to this inner product consists of vectors $v \in \RR^X$ such that $ \langle v, w \rangle=0$ for 
all $w \in W$. We denote this subspace by $W^\perp$. Note that since $ \langle \, , \, \rangle$ defines a non-degenerate pairing on $V$,  we have $\dim W+ \dim W^\perp = \dim \RR^X= |X|$. The orthogonal complement of the one-dimensional subspace $\RR \one$ of all constant functions on $X$ is easily seen to be 
\[ ( \RR \one )^\perp=  \left\{ f \in \RR^X: \, \sum_{x \in X} f(x)=0 \right\}. \]

\begin{remark}\label{sumzero}
Let $W$ be the subspace spanned by  $S= \{ \e_x- \e_y: x, y \in X \} $. If $f \in W^\perp$ then 
$ \langle f, \e_x - \e_y \rangle_X=0$, implying that $f(x)=f(y)$  for all $x,y \in X$. Hence $f$ is a constant function, and 
$W^\perp= \RR \one$. This shows that $S$ spans $\RR^X_0$. 
\end{remark}

\medskip

\noindent
{\it Choice polytopes}. We will now define  the polytopes that will be studied in this paper. For each $n \ge 1$, set 
\begin{equation}
\begin{split}      
\Theta_n^{\B} &= \{(a,B): B\subseteq[n],\ |B|\geq 1,\ a\in B \}, \\
\Theta_n^{\BW} &= \{(a,b,B): B\subseteq[n],\ |B|\geq 2,\ a,b\in B,\ a\neq b\}.
\end{split}
\end{equation}
Using this notation, we can now define the following elements of $\RR^{\Pi_n}$:
\begin{equation}\label{def:fab}
\begin{split}      
 \widetilde{ f}^B_{a}(\pi) &:=\one\{a\text{ is best in }B \} = \{ \pi \in \Pi_n: (\pi|_B)_1= a \}, \\
 \widetilde{ f}^B_{a,b}(\pi) &:=\one\{a\text{ is best in }B\text{ and }b\text{ is worst in }B\}=
\{ \pi \in \Pi_n: (\pi|_B)_1= a, \, (\pi|_B)_{|B|}=b \}, 
\end{split}
\end{equation} 
where in the first definition we assume that $(a,B) \in \Theta_n^\B$ and in the second definition we have $(a,b,B) \in 
\Theta_n^\BW$.  For $\pi\in\Pi_n$, the associated deterministic best vector and the  best--worst vector are defined by 
\[ v_\pi^\B = \bigl( \widetilde{f }^B_{a}(\pi)\bigr)_{(a,B)\in\Theta_n^{\B}} \in \RR^{\Theta_n^{\B}}, \qquad
v_\pi^\BW = \bigl(\widetilde{ f}^B_{a,b}(\pi)\bigr)_{(a,b,B)\in\Theta_n^{\BW}} \in \RR^{\Theta_n^{\BW}}. \]

\begin{definition}
The \emph{best choice polytope} and the \emph{best--worst choice polytope} on $n$ alternatives are defined by 
\[ \B_n:= \operatorname{conv}\{v^\B_\pi:\pi\in\Pi_n\}, \qquad \BW_n=\operatorname{conv}\{v^\BW_\pi:\pi\in\Pi_n\},  \]
Here, the notation $ \operatorname{conv}(A)$ denotes the convex hull of the subset $A$ of a real vector space. 
\end{definition}

In other words, $\B_n$ and $\BW_n$ are polytopes whose set of extreme points are respectively given by $v_\pi^\B$ and $ v_\pi^\BW$ as
$\pi$ is allowed to vary in the set of rankings $\Pi_n$. 
Now, if $p=(p_\pi)_{\pi\in\Pi_n}$ is a probability distribution on rankings, and $\P^p$ denotes the corresponding probability measure on $[n]$ then the induced best and best--worst probabilities are given by 
\begin{equation}
\begin{split}      
\P_p\bigl(a\text{ is best in }B\bigr) &= \sum_{\pi\in\Pi_n}p_\pi f^B_{a}(\pi), \\
\P_p\bigl(a\text{ is best in }B,\ b\text{ is worst in }B\bigr) &= \sum_{\pi\in\Pi_n}p_\pi f^B_{a,b}(\pi). 
\end{split}
\end{equation}

\section{Some function spaces and linear operators}
In this section we will define some vector spaces of functions on various sets and prove some basic properties about them. These properties will be used in the proof of the main theorem. 

\medskip

Let $B$ be a set.  We write $ B^{(2)}$ for the set of pairs of distinct points of $B$, that is, 
$ B^{(2)}= \{ (a, b): a, b \in B, a \neq b \}$. Let $k= |B|$. It is clear that $| B^{(2)}| = k( k-1)$. We now define three {\it marginal} operators. 
\begin{equation}\label{maginaldef}
\begin{split}      
\marb: \RR^{\Pi_B} \to \RR^B, \quad & \marb(h)(a)= \sum_{ \pi_1= a} h(\pi), \\
\marw: \RR^{\Pi_B} \to \RR^B, \quad & \marw(h)(b)= \sum_{ \pi_k= b} h(\pi), \\
\mar: \RR^{\Pi_B} \to \RR^{B^{(2)}} , \quad & \mar(h)(a,b)= \sum_{ \pi_1= a, \pi_k=b} h(\pi),
\end{split}
\end{equation}

\begin{example}\label{exm1}
Let $B=\{1,2,3\}$, and define $h:\Pi_B\to\RR$ by
$$
h(123)=1,\qquad h(132)=2,\qquad h(213)=-1,
$$
$$
h(231)=3,\qquad h(312)=0,\qquad h(321)=-2.
$$
Then $\marb$ and $\marw$ represented as column vectors are given by 
$$
\marb(h)
=
\begin{pmatrix}
h(123)+h(132)\\
h(213)+h(231)\\
h(312)+h(321)
\end{pmatrix}
=
\begin{pmatrix}
3\\
2\\
-2
\end{pmatrix},
$$
whereas
$$
\marw(h)
=
\begin{pmatrix}
h(231)+h(321)\\
h(132)+h(312)\\
h(123)+h(213)
\end{pmatrix}
=
\begin{pmatrix}
1\\
2\\
0
\end{pmatrix}.
$$
Note that for rankings of a set with three elements, the best and the worst elements already determine the ranking uniquely. So, for instance, we have 
$\mar(h)(1,2)= h(1,3,2)= 2.$
It is sometimes useful to regard $\mar(h)$ as a matrix with zero diagonal entries. In this example we have 
$$
\mar(h)
=
\begin{pmatrix}
0&2&1\\
3&0&-1\\
-2&0&0
\end{pmatrix}.
$$
\end{example}

\begin{remark}\label{psums}
Note that in the above example $\marb(h)$ and $\marw(h)$ can be recovered as the row sum and column sum vectors of the matrix $\mar(h)$. This simple fact will be used later. 
\end{remark}

We will also define three {\it choice} operators:
\begin{equation}
\begin{split}      
\chb:  \RR^B \to \RR^{\Pi_B}, \quad & \chb(u)(\pi)=  u(\pi_1), \\
\chw:  \RR^B \to  \RR^{\Pi_B}, \quad & \chw(u)(\pi)= u(\pi_k), \\
\ch: \RR^{B^{(2)}} \to \RR^{\Pi_B} , \quad & \ch(u)(\pi)= u( \pi_1, \pi_k). 
\end{split}
\end{equation}

\begin{example}\label{exm2}
Let $B=\{1,2,3\}$ and let $u\in\RR^B$ be given by setting  $$ u(1)=1, \quad u(2)=-2, \quad u(3)=3.$$
Here are some examples of application of the  choice operators on $u$: 
$$
\chb(u)(123)=u(1)=1,\qquad
\chb(u)(231)=u(2)=-2,\qquad \chw(u)(312)=u(2)=-2.
$$

Now let $v\in\RR^{B^{(2)}}$ be given by
$$
v(1,2)=2,\qquad v(1,3)=1,\qquad
v(2,1)=3,\qquad v(2,3)=v(3,1)=v(3,2)=0.
$$
Then, we have
$$
\ch(v)(123)=v(1,3)=1,\qquad
\ch(v)(231)=v(2,1)=3. 
$$
\end{example}

\medskip

The following simple lemma shows that the choice and marginal operators are adjoint to each other. This property will be used later.

\begin{lemma}[Adjoint property]\label{adjoint}
Suppose $h \in \RR^{\Pi_B}$. 
\begin{enumerate} 
\item For $u \in \RR^B$ we have  $\langle \marb(h), u \rangle_B= \langle h, \chb(u) \rangle_{\Pi_B}.$
\item For $u \in \RR^{B^{(2)}}$ we have $\langle \mar(h), u \rangle_{ B^{(2)}} = \langle h, \ch(u) \rangle_{\Pi_B}. $
\end{enumerate}
\end{lemma}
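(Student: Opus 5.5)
The plan is to expand both sides of each identity in the canonical bases and regroup the sum over $\Pi_B$ according to the fibres of the relevant map: $\pi\mapsto\pi_1$ for part (1), and $\pi\mapsto(\pi_1,\pi_k)$ for part (2). Throughout, $k=|B|$. The key observation is that each choice operator is constant on exactly the fibres over which the corresponding marginal operator sums. The adjoint identity is then a reindexing of a finite sum.

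For part (1), start from the right-hand side:
\[
\langle h,\chb(u)\rangle_{\Pi_B}=\sum_{\pi\in\Pi_B}h(\pi)\,u(\pi_1).
\]
Since every ranking $\pi$ has a unique best element $\pi_1\in B$, the set $\Pi_B$ is the disjoint union of the fibres $\{\pi:\pi_1=a\}$, $a\in B$. Splitting the sum over these fibres gives
\[
\sum_{a\in B}u(a)\sum_{\pi_1=a}h(\pi).
\]
By \eqref{maginaldef}, the inner sum is $\marb(h)(a)$, so the whole expression equals $\langle\marb(h),u\rangle_B$.

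Part (2) is identical with the fibres $\{\pi:\pi_1=a,\ \pi_k=b\}$ indexed by $(a,b)\in B^{(2)}$. These fibres partition $\Pi_B$ because $k\ge 2$ forces $\pi_1\neq\pi_k$, so the pair $(\pi_1,\pi_k)$ always lies in $B^{(2)}$. The same regrouping yields
\[
\sum_{(a,b)\in B^{(2)}}u(a,b)\,\mar(h)(a,b)=\langle\mar(h),u\rangle_{B^{(2)}}.
\]
The worst-choice analogue for $\marw$ and $\chw$ follows by the same argument with $\pi_k$ in place of $\pi_1$, although it is not stated.

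There is essentially no obstacle here. The only point needing care is to check that the fibres really partition $\Pi_B$, and in particular that $\pi_1\ne\pi_k$, which is where $|B|\ge 2$ is used in part (2).
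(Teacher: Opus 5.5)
Your proof is correct and is essentially the paper's argument: both expand the inner products and regroup the sum over $\Pi_B$ by the value of $\pi_1$ (resp.\ $(\pi_1,\pi_k)$), using that each ranking lies in exactly one fibre. Your remark that part (2) needs $|B|\ge 2$, so that $(\pi_1,\pi_k)\in B^{(2)}$, makes explicit a point the paper leaves implicit.
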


\begin{proof}
For the first assertion, we compute
$$ \langle \marb(h),u\rangle_B = \sum_{a\in B} \left( \sum_{\substack{\pi\in\Pi_B\\ \pi_1=a}}h(\pi) \right)u(a) = \sum_{\pi\in\Pi_B}h(\pi)u(\pi_1)= \langle h,\chb(u)\rangle_{\Pi_B}. $$
The proof of the second statement is similar.  Let $u\in\RR^{B^{(2)}}$. Then
$$ \langle \mar(h),u\rangle_{ B^{(2)}} = \sum_{(a,b)\in B^{(2)}}\mar(h)(a,b)u(a,b) = \sum_{(a,b)\in B^{(2)}} \left( \sum_{\substack{\pi\in\Pi_B\\ \pi_1=a,\ \pi_k=b}} h(\pi) \right)u(a,b).$$

Each ranking $\pi\in\Pi_B$ occurs exactly once in this sum, namely for
$(a,b)=(\pi_1,\pi_k)$. Hence $$ \langle \mar(h),u\rangle_{ B^{(2)}} = \sum_{\pi\in\Pi_B} h(\pi)u(\pi_1,\pi_k)=  \langle h,\ch(u)\rangle_{\Pi_B}. $$

\end{proof}

We are primarily interested in the images of the choice operators introduced above. The point of Lemma \ref{adjoint} is that it allows us to translate questions about these images into corresponding questions about the marginal operators. The latter turn out to be easier to analyze.

\subsection{Some function spaces}\label{sec:lift}
Let $B$ be a set and $C\subseteq B$. There is a natural map 
$$\Lift^B_C: \RR^{\Pi_C} \to \RR^{\Pi_B}, \qquad \Lift^B_C(h)(\pi)=h( \pi|_C). $$ 
It is clear that $\Lift^B_C$ is an injective linear map. The image of $\Lift^B_C$ is the linear subspace of $ \RR^{\Pi_B}$ consisting of 
those functions on rankings of $B$ that depend {\it only} on the restriction of the ranking to $C$. We write 
\[  \mathcal U^B_C:= \Lift^B_C( \RR^{\Pi_C}). \]
When $B$ is known from context, we simply write $\mathcal U_C$ to denote this subspace. Note that $\U^B_C$ is spanned by the images $  \Lift^B_C( \e_\tau)$ of the canonical basis of $\RR^{\Pi_C}$. A simple computation shows that 
\[  \Lift^B_C( \e_\tau)= \sum_{ \pi|_C= \tau} \e_{\pi}. \]
In the special case that $C= B \setminus \{ i \}$, this simplifies to 
\[ \Lift^B_C( \e_\tau)= \sum_{ \pi \setminus i= \tau} \e_{\pi}. \]
The sum here is over all $|B|$ permutations $\pi$ that can be obtained by inserting $i$ into $\tau$.  
 We also define 
$$ \L_B: =\sum_{C\subsetneq B}\mathcal U^B_C. $$

\begin{example} \vspace{3mm}

Let $B=\{1,2,3\}$ and $C=\{1,2\}$. The set $\Pi_C$ consists of the two rankings $12$ and $21$. Consider the function $h\in\RR^{\Pi_C}$ defined by $$ h(12)=1,\qquad h(21)=0. $$
Then we have 
$$ \Lift^B_C(h)(123) = \Lift^B_C(h)(132) = \Lift^B_C(h)(312) = 1, $$
whereas
$$ \Lift^B_C(h)(213) = \Lift^B_C(h)(231) = \Lift^B_C(h)(321) = 0. $$
Note that the value of $\Lift^B_C(h)$ only depends on the relative order of $1$ and $2$. In particular, it is unaffected by the position of $3$.  Since functions depending on a singleton are constant, we may write
$$ \L_{\{1,2,3\}} =\mathcal U_{\{1,2\}} + \mathcal U_{\{1,3\}} + \mathcal U_{\{2,3\}}. $$
Thus a typical element of $\L_{\{1,2,3\}}$ has the form
$$
f(\pi)
=
h_{12}(\pi|_{\{1,2\}})
+
h_{13}(\pi|_{\{1,3\}})
+
h_{23}(\pi|_{\{2,3\}}),
$$
where $h_{ij}\in\RR^{\Pi_{\{i,j\}}}$.
One can show that  $ \dim \L_{\{1,2,3\}}=4. $ In particular, $\L_{\{1,2,3\}}$ is a proper subspace of $\RR^{\Pi_B}$, which has dimension $6$. 
\end{example}

\begin{proposition}\label{ortho}
Let $B$ be a set with cardinality at least $1$. Then 
\begin{enumerate}
\item For $C_1 \subseteq C_2 \subseteq B$, we have $\U_{C_1}^B \subseteq \U_{C_2}^B$. 
\item If $C$ is non-empty then $\U_C^B$ contains the constant function $\one$.
\item \[ \L_B =\sum_{x \in B}\mathcal U^B_{B \setminus \{ x \} }. \]
\item In particular, $h \in \L_B^\perp$ if for every $i \in B $ and every $ \tau \in \Pi_{ B \setminus \{ i \} }$ we have
\[ \sum_{ \pi \setminus i = \tau } h(\pi)=0. \]
\end{enumerate}
\end{proposition}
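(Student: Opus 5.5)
For part (1), suppose $C_1 \subseteq C_2 \subseteq B$ and $h \in \RR^{\Pi_{C_1}}$. Since $\pi|_{C_1} = (\pi|_{C_2})|_{C_1}$ for every $\pi\in\Pi_B$, we get
\[ \Lift^B_{C_1}(h)(\pi) = h\big((\pi|_{C_2})|_{C_1}\big) = \Lift^B_{C_2}\big(\Lift^{C_2}_{C_1}(h)\big)(\pi). \]
Hence $\Lift^B_{C_1} = \Lift^B_{C_2}\circ \Lift^{C_2}_{C_1}$, and taking images gives $\U^B_{C_1}\subseteq \U^B_{C_2}$.

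For part (2), take any $x\in C$. The set $\Pi_{\{x\}}$ has a single element, and lifting the constant function $1$ on it gives $\one\in \U^B_{\{x\}}$. By part (1), $\one\in\U^B_C$.

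For part (3), note that every proper subset $C\subsetneq B$ misses some $x\in B$, so $C\subseteq B\setminus\{x\}$. By part (1), $\U^B_C\subseteq \U^B_{B\setminus\{x\}}$. This gives the inclusion $\L_B\subseteq \sum_x \U^B_{B\setminus\{x\}}$. The reverse inclusion is immediate, since each $B\setminus\{x\}$ is itself a proper subset of $B$.

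For part (4), use part (3). A function $h$ lies in $\L_B^\perp$ exactly when it is orthogonal to every $\U^B_{B\setminus\{i\}}$, $i \in B$. Each such subspace is spanned by the vectors $\Lift^B_{B\setminus\{i\}}(\e_\tau)=\sum_{\pi\setminus i=\tau}\e_\pi$, as computed in \S\ref{sec:lift}. Therefore
\[ \Big\langle h, \Lift^B_{B\setminus\{i\}}(\e_\tau)\Big\rangle=\sum_{\pi\setminus i=\tau}h(\pi). \]
If all these sums vanish, then $h$ is orthogonal to a spanning set of each $\U^B_{B\setminus\{i\}}$, hence to $\L_B$. The converse holds by the same computation, so the condition is in fact an equivalence.

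No step is a real obstacle. The only point needing care is the compatibility of restrictions, $(\pi|_{C_2})|_{C_1}=\pi|_{C_1}$, which drives part (1) and hence everything else.
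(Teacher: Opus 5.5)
Your proof is correct and follows the same route as the paper. The paper only says that (1) and (2) are immediate from the definition, that (3) follows formally from (1), and that (4) follows from (3) together with the spanning set $\Lift^B_{B\setminus\{i\}}(\e_\tau)=\sum_{\pi\setminus i=\tau}\e_\pi$. You supply the omitted details, namely the factorization $\Lift^B_{C_1}=\Lift^B_{C_2}\circ\Lift^{C_2}_{C_1}$ and the observation that every proper subset lies in some $B\setminus\{x\}$, and your remark that the condition in (4) is in fact an equivalence is also correct.
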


\begin{proof}
Parts (1) and (2) are immediate from the definition of $\U_C^B$.  Part (3) follows formally from (1). 
Finally, part (4) follows from  (3) the description of the basis of $\U_{B \setminus \{ i \} }^B$ given above. 
\end{proof}

 \section{The dimension of the choice polytope}
In order to motivate the proof of Theorem \ref{thm:main} that deals with the dimension of $\BW_n$, we will first consider the simpler problem of $\dim \B_n$ and illustrate the approach that will be used later for the more challenging problem of determining $\dim \BW_n$. This approach is clearly not the shortest one, but has the advantage that it generalizes very naturally, although several steps of the proof are somewhat simpler.

We first start by setting some additional notation. 
Fix a nonempty subset $B\subseteq[n]$ and $a\in B$. The functions $f^B_a \in \RR^{\Pi_B}$ defined by
$$ f^B_a(\pi)=\mathbf{1}_{\{a\text{ is the best element of }B\text{ under }\pi\} }= \begin{cases} 1 & \textrm{if } \, \pi_1=a\\
0 & \,  \textrm{otherwise } 
\end{cases}     $$

Let us mention that the function $f^B_a \in \RR^{\Pi_B}$ is formally different from functions $\widetilde{ f}^B_a$ defined in \eqref{def:fab}, which are elements of 
$\RR^{\Pi_n}$.   In fact, one can easily verify that $ \widetilde{f}_a^B = \Lift_{B}^{[n]} f_a^B$.
It will be convenient to first work with these {\it local} functions. When we put them together in Proposition \ref{prop:choice-filtration}, we will go back to 
 $\widetilde{ f}^B_a$ which all belong to the same vector space $\RR^{\Pi_n}$. 

For a subset $B$ as above, set 
$$ \E_B = \operatorname{span}\{f^B_{a}:a \in B \} \subseteq \RR^{\Pi_B} $$
It is clear that $f_a^B= \chb{\e_a}$ and hence $\E_B= \chb(\RR^B) \subseteq  \RR^{\Pi_B}$. It is easy to see that 
$\chb$ is injective and hence  $\dim \E_B=|B|$. Injectivity of $\chb$ implies that its adjoint $ \marb$ is surjective. The next lemma determines the image of $\marb$ when restricted to the orthogonal complement of $ \L_B$.

\begin{lemma}\label{lem:choice-first-marginal}
Let $B$ be a finite set with $|B|\ge2$. Then
$$ \marb(\L_B^\perp) = \RR^B_0. $$
\end{lemma}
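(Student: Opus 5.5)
We have to show two inclusions: $\marb(\L_B^\perp)\subseteq\RR^B_0$, and every sum-zero vector on $B$ is attained.

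\textbf{The inclusion $\subseteq$.} Since $|B|\ge 2$, part (2) of Proposition \ref{ortho} (applied to any nonempty proper subset $C$) shows that $\one\in\L_B$. Hence every $h\in\L_B^\perp$ satisfies $\sum_\pi h(\pi)=0$. Then
$$\sum_{a}\marb(h)(a)=\sum_\pi h(\pi)=0,$$
so $\marb(h)\in\RR^B_0$.

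\textbf{The inclusion $\supseteq$.} By Remark \ref{sumzero}, $\RR^B_0$ is spanned by the differences $\e_a-\e_b$, and $\marb$ is linear. It therefore suffices to produce, for each pair $a\neq b$ in $B$, an explicit $h_{a,b}\in\L_B^\perp$ with $\marb(h_{a,b})=\e_a-\e_b$. To check that a candidate lies in $\L_B^\perp$ we use the criterion of Proposition \ref{ortho}(4): for every $i\in B$ and every $\tau\in\Pi_{B\setminus\{i\}}$, the sum of $h$ over all insertions of $i$ into $\tau$ must vanish.

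\textbf{The construction.} Write $C=B\setminus\{a,b\}$ and fix a ranking $\sigma$ of $C$. Consider the alternating sum
$$h=\e_{a\bullet b\bullet\sigma}-\e_{b\bullet a\bullet\sigma}-\e_{a\bullet\sigma\bullet b}+\e_{b\bullet\sigma\bullet a},$$
where single letters denote one-element rankings. Its best-marginal is $(1-1)\e_a+(-1+1)\e_b=0$, which is useless, so this naive guess must be replaced. A better idea is a full alternating sum
$$h=\sum_{\pi}\operatorname{sgn}(\pi)\,\e_\pi,$$
taken over all rankings of the form $x\bullet\rho$ with $x\in\{a,b\}$ and $\rho$ ranging over rankings of $B\setminus\{x\}$, with the sign chosen so that deleting any element $i$ pairs up terms that cancel. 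The cleanest choice is to antisymmetrize: take $\pi\mapsto\operatorname{sgn}(\pi)$ times the indicator that $\pi_1\in\{a,b\}$, with a sign correction making the totals $\pm$ at $a$ and $b$.

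The verification of Proposition \ref{ortho}(4) for such $h$ is the main obstacle. It requires a sign-reversing involution on the insertions of $i$ into a fixed $\tau$, pairing insertion at position $j$ with position $j+1$; swapping two adjacent entries flips the sign. Insertions that change the first entry need separate care.

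\textbf{A fallback.} If the explicit construction gets messy, I would argue instead by dimension. Lemma \ref{adjoint} gives $\marb(\L_B^\perp)^\perp=(\chb)^{-1}(\L_B)$, so it suffices to show that $\chb(u)\in\L_B$ forces $u$ to be constant. Suppose $\chb(u)=\sum_x g_x$ with each $g_x$ depending only on $\pi\setminus x$. Choose $\pi$ with $\pi_1=a$ and $\pi_2=b$, and compare $\pi$ with the ranking obtained by swapping its first two entries. Taking suitable alternating combinations over the positions of $a$ and $b$ kills every $g_x$ with $x\notin\{a,b\}$ as well as $g_a$ and $g_b$. This yields $u(a)-u(b)=0$, which is the desired constancy.
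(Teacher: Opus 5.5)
\textbf{There is a genuine gap: the proposal never constructs the required $h$.}

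Your proof of $\marb(\L_B^\perp)\subseteq\RR^B_0$ is fine. Both of your reductions for the reverse inclusion are also correct: it suffices to find, for $a\neq b$, some $h\in\L_B^\perp$ with $\marb(h)=\e_a-\e_b$, or dually (via Lemma~\ref{adjoint}) to show that $\chb(u)\in\L_B$ forces $u$ to be constant. But producing such an $h$ is the entire content of the lemma, and you never do it.

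Your antisymmetrized candidate cannot work, and the problem is not just the signs. For $|B|\ge3$, no $h\in\L_B^\perp$ supported on $\{\pi:\pi_1\in\{a,b\}\}$ has $\marb(h)(a)\neq0$. Deleting $a$ from any $\tau$ with $\tau_1\notin\{a,b\}$ leaves only the insertion $a\bullet\tau$ in the support, which forces $h(a\bullet\tau)=0$; the same holds for $b$. So $h$ lives on rankings beginning with $(a,b)$ or $(b,a)$. Then deleting $x\notin\{a,b\}$ from rankings $(a,b)\bullet\sigma'$ shows that $\sigma\mapsto h((a,b)\bullet\sigma)$ lies in $\L_{B\setminus\{a,b\}}^\perp$. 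Since $\one\in\L_{B\setminus\{a,b\}}$, this function sums to $0$, and that sum is exactly $\marb(h)(a)$. For your specific choice you can also see directly that $\sum_{\pi_1=a}\operatorname{sgn}(\pi)=0$ once $|B|\ge3$.

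\textbf{The fallback restates the problem rather than solving it.} A signed combination $h=\sum_\pi c_\pi\e_\pi$ orthogonal to every $g_x\in\U_{B\setminus\{x\}}$ is, by Proposition~\ref{ortho}(3), exactly an element of $\L_B^\perp$. The condition $\langle h,\chb(u)\rangle=u(a)-u(b)$ means $\marb(h)=\e_a-\e_b$. So the ``suitable alternating combinations'' are the same unconstructed object in dual form.

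Your seed $\e_{(a,b)\bullet\rho}-\e_{(b,a)\bullet\rho}$ is orthogonal to $\U_{B\setminus\{a\}}$ and $\U_{B\setminus\{b\}}$. It is not orthogonal to $\U_{B\setminus\{x\}}$ for $x\notin\{a,b\}$. Repairing this requires correction terms in which other elements move in front of the $\{a,b\}$ block.

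\textbf{The missing idea, as in the paper,} is to normalize the rest of the ranking so that each deletion fiber meets the support in at most two points. Set $h=(-1)^{|C_1|}$ on $\pup_{C_1}\bullet(a,b)\bullet\pdn_{C_2}$ and $h=(-1)^{|C_1|+1}$ on $\pup_{C_1}\bullet(b,a)\bullet\pdn_{C_2}$ for all $C_1\sqcup C_2=B\setminus\{a,b\}$, and $h=0$ elsewhere.

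When $x\notin\{a,b\}$ is deleted, the only reinsertions landing in the support are the unique place keeping the prefix increasing and the unique place keeping the suffix decreasing. These change $|C_1|$ by one. When $a$ is deleted, the only such reinsertions are immediately before or after $b$. In both cases the two terms cancel.

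For the marginal, only $(a,b)\bullet\pdn_{B\setminus\{a,b\}}$ and $(b,a)\bullet\pdn_{B\setminus\{a,b\}}$ in the support start with $a$ or $b$. Support rankings starting with $c\notin\{a,b\}$ cancel in pairs differing in the order of $a,b$. Hence $\marb(h)=\e_a-\e_b$.
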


\begin{proof}
First note that if $h \in \L_B^\perp$ then it follows from Lemma \ref{adjoint} that 
\[ \langle \marb (h), \one \rangle = \langle h, \chb(\one) \rangle= \langle h, \one \rangle =0,  \]
where the last equality follows from $\one  \in \L_B$. 
For the reverse inclusion, using Remark \ref{sumzero}, it suffices to show that $\e_p-\e_q \in \marb(\L_B^\perp) $ for all distinct
$p, q \in B$. We will construct $h \in \L_B^\perp$ with $\marb(h)=\e_p - \e_q$. 

Define $C_{p<q}$ to be the set of rankings of $B$ in which $p$ appears immediately before $q$, the elements before $p$ are increasing and those after $q$ are in decreasing order. In other words, $C_{p<q}$ consists of rankings of the form 
\[  \pup_{C_1} \bullet (p,q) \bullet \pdn_{C_2} \]
where $C_1 \sqcup C_2=  B \setminus \{ p, q \} $. Define $C_{q<p}$ in a similar way with $(p,q)$ in the middle replaced by $(q,p)$.
Define 
\[ h(\pi)= \begin{cases} (-1)^{|C_1|} & \textrm{if } \, \pi \in C_{p<q}\\
(-1)^{|C_1|+1} & \textrm{if } \, \pi \in C_{q<p}  \\
0 &  \,  \textrm{otherwise } \\
\end{cases}    \]
We first use Proposition \ref{ortho} to show that $h \in \L_B^{\perp}$. Let $i \in B$ and $\tau$ be a ranking of $B \setminus \{ i \}$. We need to show that 
\[ \sum_{ \pi \setminus i = \tau } h(\pi)=0. \]
Let us first assume that $i \not \{ p, q \}$, and observe that if $p$ and $q$ are non-adjacent in $\tau$ they will also non-adjacent over all possible $\pi$ obtained by inserting $i$. Hence for all such $\tau$ all the terms in the sum are zero and there is nothing to prove. Thus, we can assume that $p$ and $q$ are adjacent in $\tau$. First, assume that $p$ precedes $q$ and hence
$\tau= \tau_1 \bullet (p,q) \bullet \tau_2$. Let $C_1$ be the elements of $B$ appearing before $p$ and $C_2$ the elements of $B$ appearing after $q$, so that 
$C_1 \cup C_2= B \setminus  \{ i,p,q \}$. It follows from the definition of $h$ that $h(\pi)=0$ unless $ \tau_1= \pup_{C_1}$ and $\tau_2= \pdn_{C_2}$. Note that then there is a unique way of inserting  $i$ in $C_1$ or in $C_2$ so that the part of the ranking before $p$ and the part after $q$ are increasing and decreasing. As an example, suppose $n=8$, $p=4,q=5$ and $i=3$. Let 
$\tau= (1,2,4,5,8,7,6)$. Here $\tau_1=(1,2)$ and $\tau_2=(8,7,6)$. Now, one can insert $3$ at the end of $\tau_1$ and obtain
$\pi=(1,2,3,4,5,8,7,6)$ or at the end of $\tau_2$ and obtain $\pi=(1,2,4,5,8,7,6,3)$.
 It is clear that for these two permutations $\pi_1$ and $\pi_2$ we have $$h(\pi_1)+ h(\pi_2)= (-1)^{|C_1|}+ (-1)^{|C_1|+1}=0.$$ So the claim holds when $i \neq p,q$. When $i=p$, then by a similar argument we must have $\tau= \tau_1 \bullet q \bullet \tau_2$, where $ \tau_1= \pup_{C_1}$ and $\tau_2= \pdn_{C_2}$. In this case, $q$ can be inserted either before or after $p$ and from the definition of $h$ resulting in rankings $\pi_1, \pi_2 \in \Pi_B$  with $h(\pi_1)+ h( \pi_2)=0$. 

Finally, observe that $\marb(h)(p)= \sum_{\pi_1= p}  h(\pi)=1$ as there is a unique $\pi$ in the support of $h$ starting with $p$. A similar argument shows that 
$ \marb(h)(q)=-1$. For every $a \in B \setminus \{ p,q \}$, if $h(\pi) \neq 0$ and $\pi_1=a$ then, we have $\pi= \tau_1 \bullet (p,q) \bullet \tau_2$ or $\pi= \tau_1 \bullet (q,p) \bullet \tau_2$  with $\tau_1 \in \Pi_{C_1}$ and $a$ the smallest element of $C_1$. Since the value of $h$ on these two subsets of equal size cancel each other, the claim follows in this case as well. 
\end{proof}

\begin{lemma}\label{lem:choice-local}
Let $B\subseteq[n]$ with $|B|\ge 2$. Then
$$ \E_B\cap \L_B=\RR\mathbf{1}. $$
In particular, we have $$ \dim \E_B/(\E_B\cap \L_B)=|B|-1. $$
\end{lemma}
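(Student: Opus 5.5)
The plan is to prove the two inclusions separately and then read off the dimension count.

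For the inclusion $\RR\one\subseteq \E_B\cap\L_B$: we have $\one=\chb(\one)=\sum_{a\in B}f^B_a\in\E_B$. By Proposition \ref{ortho}(2), $\one$ lies in $\U^B_C$ for any nonempty proper subset $C\subsetneq B$, which exists because $|B|\ge 2$. Hence $\one\in\L_B$.

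For the reverse inclusion, I will pass to orthogonal complements and use Lemma \ref{lem:choice-first-marginal} together with the adjoint property.

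\begin{enumerate}
\item Let $f\in\E_B\cap\L_B$. Since $f\in\E_B=\chb(\RR^B)$, write $f=\chb(u)$ for some $u\in\RR^B$.
\item Because $f\in\L_B$, we have $\langle h,f\rangle=0$ for every $h\in\L_B^\perp$.
\item By Lemma \ref{adjoint}, this reads $\langle \marb(h),u\rangle_B=0$ for all $h\in\L_B^\perp$.
\item By Lemma \ref{lem:choice-first-marginal}, $\marb(h)$ ranges over all of $\RR^B_0$. Therefore $u\in(\RR^B_0)^\perp=\RR\one$.
\item It follows that $u=c\one$, and so $f=c\,\chb(\one)=c\one$.
\end{enumerate}

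This gives $\E_B\cap\L_B=\RR\one$.

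For the dimension statement: $\chb$ is injective, since $\chb(u)(\pi)=u(\pi_1)$ and every $a\in B$ occurs as the first entry of some ranking. Hence $\dim\E_B=|B|$. Combined with the first part,
$$\dim \E_B/(\E_B\cap\L_B)=|B|-1.$$

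There is no real obstacle in this argument. The substantive work, namely the construction of elements of $\L_B^\perp$ with prescribed best-marginals, is already contained in Lemma \ref{lem:choice-first-marginal}. The only point needing care is the duality step, where one uses $\L_B=(\L_B^\perp)^\perp$ in the finite-dimensional space $\RR^{\Pi_B}$.
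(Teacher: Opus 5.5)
Your proof is correct and matches the paper's argument: write the element as $\chb(u)$, use the adjoint property to turn orthogonality to $\L_B^\perp$ into $u\perp\marb(\L_B^\perp)=\RR^B_0$, conclude $u$ is constant, and use $\dim\E_B=|B|$ for the quotient. One small correction to your closing remark: $\langle h,f\rangle=0$ for $h\in\L_B^\perp$ follows directly from $f\in\L_B$ and the definition of $\L_B^\perp$, so you never need $\L_B=(\L_B^\perp)^\perp$.
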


\begin{proof}
It is clear that $ \RR \one \subseteq \E_B \cap \L_B$. To prove the reverse inclusion, let $g\in \E_B \cap \L_B$. Since $g \in \E_B$, we can write  $g = \chb(u)$ for some $u \in \RR^B$. 
 Since $g \in \L_B$, we know that $g$ is orthogonal to every element of $\L_B^\perp$. This implies that for every $h \in \L_B^\perp$ we have
 \[ 0 = \langle g, h \rangle_{\Pi_B}=   \langle \chb(u), h \rangle_{\Pi_B} = \langle u, \marb(h) \rangle_{B}. \]
Using Lemma \ref{lem:choice-first-marginal} we deduce that $u$ is orthogonal to $\RR^B_0$, hence $u$ is constant. This implies that $g$ is constant finishing the proof.  Since $\dim E_B=|B|$, the quotient has dimension $|B|-1$.
\end{proof}

Note that, as $B$ ranges over the subsets of $[n]$, the subspaces $\E_B$ {\it live} in different vector spaces $\RR^{\Pi_B}$. Our next goal is to embed them into the same vector
space $\RR^{\Pi_n}$ and identify nontrivial intersection patterns in these embeddings. To compute the dimension of the choice polytope, we therefore need to understand these intersections.  For $B \subseteq [n]$, define the isomorphic copy of $\E_B$ in $\RR^{\Pi_n}$ via
$$ \widetilde{\E}_B= \Lift_B^{[n]}( \E_B) \subseteq \RR^{\Pi_n}.$$

Note that $ \widetilde{\E}_B$ is spanned by the functions $ \widetilde{ f}_B^a$ with $a \in B$ and the map sending $g \in \E_B$ to $ \widetilde{ g}:= \Lift_B^{[n]}(g) \in \widetilde{ E}_B$ defines an isomorphism of vector spaces. We can now introduce a filtration of the vector space $ \RR^{\Pi_n}$ as follows:

\begin{equation}\label{def:vk}
\V_{\le k} = \sum_{\substack{B\subseteq[n]\\1\le |B|\le k}} \widetilde{\E}_B, \qquad 1 \le k \le n.
\end{equation}

Note that
\begin{equation}\label{filt}
\V_{\le 1} \subseteq \V_{\le 2} \subseteq \cdots \subseteq \V_{\le n}. 
\end{equation}
Since $\widetilde{\E}_{\{a\}}=\RR\mathbf{1}$ for every $a\in[n]$, we have $\dim \V_{\le1}=1$.

\begin{proposition}\label{prop:choice-filtration}
For every $2\le k\le n$,
$$ \V_{\le k}/ \V_{\le k-1} \cong \bigoplus_{\substack{B\subseteq[n]\\|B|=k}} \E_B/(\E_B\cap \L_B). $$
In particular, $$ \dim \V_{\le k}-\dim \V_{\le k-1} = (k-1)\binom{n}{k}. $$
\end{proposition}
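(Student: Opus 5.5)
We define a linear map
\[ \Phi:\bigoplus_{|B|=k}\E_B\longrightarrow \V_{\le k}/\V_{\le k-1},\qquad (g_B)_B\mapsto \sum_B \widetilde{g}_B+\V_{\le k-1}, \]
where $\widetilde g_B=\Lift^{[n]}_B(g_B)$. By the definition \eqref{def:vk}, $\V_{\le k}=\V_{\le k-1}+\sum_{|B|=k}\widetilde\E_B$, so $\Phi$ is surjective. The statement then follows once we show that the kernel of $\Phi$ is exactly $\bigoplus_B(\E_B\cap\L_B)$. The dimension formula is a consequence of Lemma \ref{lem:choice-local}, which gives each summand dimension $k-1$, summed over the $\binom nk$ subsets $B$.

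\textbf{Kernel contains $\bigoplus_B(\E_B\cap\L_B)$.} If $g_B\in\L_B$, write $g_B=\sum_{C\subsetneq B}u_C$ with $u_C\in\U^B_C$. Lifting is transitive: $\Lift^{[n]}_B\circ\Lift^B_C=\Lift^{[n]}_C$. Hence $\widetilde g_B\in\sum_{|C|\le k-1}\U^{[n]}_C$.

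It remains to check that $\U^{[n]}_C\subseteq \V_{\le k-1}$ whenever $|C|\le k-1$, i.e.\ that $\V_{\le m}=\sum_{|C|\le m}\U^{[n]}_C$. We prove this by induction on $m$. The indicator of $\pi|_C=\tau$ can be written as an inclusion-exclusion in the best-choice indicators: it is the product $\prod_j \widetilde f^{\{\tau_j,\dots,\tau_m\}}_{\tau_j}$. That product is not obviously a linear combination, so instead we use a counting check: $\dim\sum_{|C|\le m}\U_C$ can be compared inductively. Rather than that, the cleaner route is to prove directly that $\V_{\le m}\supseteq\U_C$ for $|C|=m$ by induction on $m$, using a telescoping identity. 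For $\tau=(\tau_1,\dots,\tau_m)$,
\[ \one\{\pi|_C=\tau\}=\one\{\pi|_{C\setminus\tau_1}=\tau\setminus\tau_1\}\cdot \widetilde f^{C}_{\tau_1}. \]
This identity again involves a product, so we fall back on a dimension argument instead. By induction we have both $\V_{\le m-1}\subseteq \sum_{|C|\le m-1}\U_C$ (the easy direction, already shown) and equality of dimensions. This step is the main obstacle.

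\textbf{Kernel is contained in $\bigoplus_B(\E_B\cap\L_B)$.} We use orthogonality. Suppose $\sum_B\widetilde g_B\in\V_{\le k-1}\subseteq\sum_{|C|\le k-1}\U^{[n]}_C$. Fix $B_0$ with $|B_0|=k$ and $h\in\L_{B_0}^\perp$, and consider the test function $H(\pi)=h(\pi|_{B_0})$ on $\Pi_n$.

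First, $H$ is orthogonal to $\U^{[n]}_C$ for every $C\not\supseteq B_0$. Indeed, fibering $\Pi_n$ over the restriction to $C\cup B_0$, a function of $\pi|_C$ paired with $h(\pi|_{B_0})$ reduces, by uniform counting of extensions, to a pairing on $\Pi_{C\cup B_0}$. There the relevant sum over $\pi|_{B_0}$ with fixed $\pi|_{C\cap B_0}$ vanishes, by Proposition \ref{ortho} applied to $C\cap B_0\subsetneq B_0$.

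This orthogonality has two consequences. $H$ kills $\V_{\le k-1}$, and $H$ kills $\widetilde g_B$ for every $B\ne B_0$ with $|B|=k$. It therefore also kills $\widetilde g_{B_0}$, and since the pairing reduces to $\Pi_{B_0}$ up to the constant factor $n!/k!$, we get $\langle g_{B_0},h\rangle=0$. As $h$ ranges over $\L_{B_0}^\perp$, this gives $g_{B_0}\in\L_{B_0}$.

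This converse direction in fact needs only the inclusion $\V_{\le k-1}\subseteq\sum_{|C|\le k-1}\U_C$, which is immediate because $\widetilde f^B_a\in\U_B$. So the dimension obstacle flagged above is avoided. For the first direction, it suffices to show $\sum_{|C|\le k-1}\U_C\subseteq\V_{\le k-1}$, which we do by an induction on $|C|$ via the dimension count: by the converse argument, $\dim\V_{\le m}\ge\dim\V_{\le m-1}+\sum_{|B|=m}\dim\E_B/(\E_B\cap\L_B)$. We will attempt to match this against a known upper bound for $\dim\sum_{|C|\le m}\U_C$. The key step to verify there is $\U_C\subseteq\V_{\le |C|}$, i.e.\ that the indicators $\one\{\pi|_C=\tau\}$ lie in the span of best-choice indicators of subsets of $C$. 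We will try to prove this by Möbius inversion over the chain of suffix sets of $\tau$.
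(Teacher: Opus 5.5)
\textbf{There is a genuine gap in each of the two inclusions.} For $\ker\Phi\subseteq\bigoplus_B(\E_B\cap\L_B)$, you claim that $H=\Lift^{[n]}_{B_0}(h)$ is orthogonal to $\U^{[n]}_C$ for every $C\not\supseteq B_0$. This is false already for $C=B$ a $k$-set different from $B_0$, which is exactly the case you need. The reduction to $\Pi_{C\cup B_0}$ is fine. The next step is not: the number of $\sigma\in\Pi_{C\cup B_0}$ with $\sigma|_C=\alpha$ and $\sigma|_{B_0}=\beta$ depends on $\beta$ itself, not only on $\beta|_{C\cap B_0}$, so Proposition \ref{ortho} does not apply.

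Concretely, take $n=3$, $k=2$ and $B_0=\{2,3\}$. Then $\L_{B_0}=\RR\one$ and $h=\e_{(2,3)}-\e_{(3,2)}\in\L_{B_0}^\perp$. With $B=\{1,2\}$ you get $\langle H,\widetilde f^{B}_1\rangle=H(123)+H(132)+H(312)=1-1-1=-1\neq 0$. So pairing with the lifted $h$ does not isolate $g_{B_0}$.

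What is missing is a test that sees only a single \emph{slice} of $\Pi_n$. The paper evaluates along $\iota_{B_0}(\pi)=\pi\bullet\rho$ for a fixed $\rho\in\Pi_{B_0^c}$. Equivalently, it pairs with the function equal to $h(\pi)$ at $\pi\bullet\rho$ and $0$ elsewhere. On that slice, the best element of any $B$ meeting $B_0$ is the best element of $B\cap B_0$, and it is fixed when $B\cap B_0=\varnothing$. Hence $\Comp_{B_0}\widetilde g_B\in\U_{B\cap B_0}\subseteq\L_{B_0}$ for every $B\neq B_0$ with $|B|\le k$, while $\Comp_{B_0}\widetilde g_{B_0}=g_{B_0}$.

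For the reverse inclusion, you try to prove $\U^{[n]}_C\subseteq\V_{\le |C|}$, that is, that each indicator $\one\{\pi|_C=\tau\}$ is a linear combination of best-choice indicators of subsets of $C$. This is false once $|C|\ge 4$. Using $\sum_{a\in B}f^B_a=\one$ to discard one generator per $B$, the best-choice indicators on subsets of a $4$-set, together with $\one$, span at most $1+\sum_{j=2}^{4}(j-1)\binom{4}{j}=18$ dimensions. But $\U_C\cong\RR^{\Pi_C}$ has dimension $24$, and by the relabeling symmetry of $C$, if one indicator lay in that span then all would. So $\V_{\le m}=\sum_{|C|\le m}\U_C$ fails for $m\ge 4$. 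Your route therefore breaks down for $k\ge 5$, and no comparison with $\dim\sum_{|C|\le m}\U_C$ can supply the needed upper bound.

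The point you overlooked is that $g_B$ lies in $\E_B$ as well as in $\L_B$. By Lemma \ref{lem:choice-local}, which you already invoke for the dimension count, $\E_B\cap\L_B=\RR\one\subseteq\V_{\le 1}\subseteq\V_{\le k-1}$. With that, this inclusion is immediate.
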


\begin{remark}
Before starting with the proof, let us elaborate on the statement of the theorem.  For each $k$-element subset $B\subseteq[n]$, consider the natural map from $\widetilde{ \E}_B$ to the quotient $\V_{\le k}/ \V_{\le k-1}$. By definition of $\V_{\le k}$, these maps together induce a surjective linear map
$$ \Phi: \bigoplus_{\substack{B\subseteq[n]\\|B|=k}} \E_B \longrightarrow \V_{\le k}/ \V_{\le k-1}, \qquad (g_B)_B\longmapsto
\sum_{|B|=k} \widetilde{ g}_B+\V_{\le k-1}. $$
The main claim of this theorem is that the kernel of $\Phi$ is given by $$ \ker\Phi = \bigoplus_{\substack{B\subseteq[n]\\|B|=k}} (\E_B\cap \L_B). $$
In other words, if $\Phi((g_B)_B)=0$ in the quotient space $\V_{\le k}/ \V_{\le k-1}$ then $g_B \in \E_B \cap \L_B$ for every $k$-element subset of $ [n]$. 
\end{remark}

\begin{proof}[Proof of Theorem \ref{prop:choice-filtration}]
In order to prove this, suppose first that $g_B\in \E_B$ are such that 
$$ \sum_{\substack{B\subseteq[n]\\|B|=k}}g_B \in V_{\le k-1}.$$
We must prove that, for every $k$-element subset $A\subseteq[n]$, we have $ g_A\in \E_A\cap \L_A. $

Fix such a set $A$. Choose once and for all a ranking $\rho$ of $A^c$. For every ranking $\pi\in\Pi_A$, let $\iota_A(\pi)\in\Pi_n$ be defined by $ \iota(\pi)= \pi \bullet \rho$. In other words, we define $\iota(\pi)$ to be the ranking obtained by first listing the elements of $A$ in the order prescribed by $\pi$, and then listing the elements of $A^c$ in the fixed order $\rho$. Thus, in every ranking in the image of $\iota_A$, all elements of $A$ precede all elements of $A^c$, while the relative order inside $A$ is allowed to vary as $\pi$ varies in $\Pi_A$.  Define the compression map 
$$ \Comp_A:\RR^{\Pi_n}\longrightarrow\RR^{\Pi_A} , \qquad \Comp_Af=f\circ\iota_A. $$
This map is indeed closely related to the lift maps defined in Section \ref{sec:lift}. We identify functions in $\E_A$ with the corresponding functions on $\Pi_A$. With this identification, $ \Comp_Ag_A=g_A$,  since $g_A$ depends only on the best element of $A$.

We next show that, for every $k$-element subset $B\neq A$, we have $  \Comp_Ag_B\in \L_A. $ Since $|A|=|B|=k$ and $A\neq B$, the intersection $B\cap A$ is a proper subset of $A$.

Assume first that $B\cap A\neq\varnothing$. In every ranking in the image of $\iota_A$, all elements of $B\cap A$ precede all elements of $B\cap A^c$. Hence the best  element of $B$ is exactly the best element of $B\cap A$. Since $g_B\in \E_B$, the function $g_B$ depends only on the most preferred element of $B$. It follows that $\Comp_Ag_B$ depends only on the ranking induced on $B\cap A$. Therefore
$$  \Comp_Ag_B\in \U_{B\cap A}\subseteq \L_A. $$
If $B\cap A=\varnothing$, then $B\subseteq A^c$. The order on $A^c$ has been fixed, so the best element of $B$ is fixed as $\pi\in\Pi_A$ varies. Hence $ \Comp_Ag_B$ is constant, and therefore again belongs to $\L_A$.

Thus $ \Comp_Ag_B\in \L_A $ for every $B\neq A$ with $|B|=k$. We also claim that
$$  \Comp_A(\V_{\le k-1})\subseteq \L_A. $$
It is enough to verify this for a function $g_C\in \E_C$ with $|C|\le k-1$. If $C\cap A\neq\varnothing$, then, for rankings in the image of $\iota_A$, every element of $C\cap A$ precedes every element of $C\cap A^c$. Hence the best element of $C$ is the best element of $C\cap A$, and therefore $\Comp_Ag_C$ depends only on the induced ranking on $C\cap A$. Since
$ |C\cap A|\le |C|\le k-1,$ the set $C\cap A$ is a proper subset of $A$, and thus $ \Comp_A g_C\in \U_{C\cap A}\subseteq \L_A. $
If $C\cap A=\varnothing$, then $ \Comp_A g_C$ is constant because the order on $A^c$ is fixed. Hence again $ \Comp_A g_C\in \L_A$. This proves $  \Comp_A(V_{\le k-1})\subseteq \L_A. $
Now suppose $$v:= \sum_{\substack{B\subseteq[n]\\|B|=k}}g_B \in V_{\le k-1}.$$ Applying $ \Comp_A$ gives
$$  \Comp_A v = g_A+ \sum_{\substack{|B|=k\\B\neq A}}  \Comp_A g_B. $$
Combining  the preceding observations and $  \Comp_A  v\in \L_A $ we deduce
$$ g_A =   \Comp_A v- \sum_{\substack{|B|=k\\B\neq A}}  \Comp_A g_B \in \L_A. $$
Since $g_A\in \E_A$ by assumption, we conclude that $ g_A\in \E_A\cap \L_A. $ As $A$ was arbitrary, this holds for every $k$-element subset $A\subseteq[n]$. Hence
$$ \ker\Phi \subseteq \bigoplus_{\substack{B\subseteq[n]\\|B|=k}} (\E_B\cap \L_B). $$

For the reverse inclusion, let $g_B\in \E_B\cap \L_B$. By Lemma~\ref{lem:choice-local}, $g_B$ is constant. Therefore $g_B\in V_{\le1}\subseteq V_{\le k-1}$. Hence every element of $$ \bigoplus_{\substack{B\subseteq[n]\\|B|=k}} (\E_B\cap \L_B) $$ lies in $\ker\Phi$. Thus
$$ \ker\Phi = \bigoplus_{\substack{B\subseteq[n]\\|B|=k}} (\E_B\cap \L_B).$$

Finally, by Lemma~\ref{lem:choice-local}, we have $ \dim E_B/(E_B\cap L_B)=k-1 $
for every $k$-element set $B$. Since there are $\binom{n}{k}$ such subsets, we obtain
$ \dim V_{\le k}-\dim V_{\le k-1} = (k-1)\binom{n}{k}. $
\end{proof}

We can now compute the dimension of the choice polytope.

\begin{theorem}\label{thm:choice-dimension}
The dimension of the choice polytope on $n$ alternatives is
$$ \dim \B_n = \sum_{k=2}^n(k-1)\binom{n}{k} = (n-2)2^{n-1}+1. $$
\end{theorem}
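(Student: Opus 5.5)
We will relate the dimension of $\B_n$ to the dimension of the span $\V_{\le n}$ of all the functions $\widetilde f^B_a$, and then sum the increments supplied by Proposition \ref{prop:choice-filtration}.

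\textbf{Step 1: from the polytope to a linear span.} The vertices of $\B_n$ are the columns $v^\B_\pi$ of the matrix $M$ with rows indexed by $\Theta_n^\B$ and columns by $\Pi_n$, whose entries are $M_{(a,B),\pi}=\widetilde f^B_a(\pi)$. The affine hull of the columns has dimension $\operatorname{rank} M - 1$ as soon as some nonzero linear functional is constant on all columns. Such a functional exists: summing the rows $(a,\{1\})$, or more generally $\sum_{a\in B}\widetilde f^B_a=\one$, shows that the functional ``sum of the coordinates indexed by $B$'' takes the value $1$ on every vertex. Hence $\dim\B_n=\operatorname{rank}M-1$. Since row rank equals column rank, $\operatorname{rank}M=\dim\operatorname{span}\{\widetilde f^B_a\}=\dim\V_{\le n}$.

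\textbf{Step 2: telescoping.} We have $\dim\V_{\le1}=1$, and Proposition \ref{prop:choice-filtration} gives the increments. Therefore
\[ \dim\V_{\le n}=1+\sum_{k=2}^n(k-1)\binom nk, \]
and consequently $\dim\B_n=\sum_{k=2}^n(k-1)\binom nk$.

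\textbf{Step 3: closed form.} The $k=1$ term of $\sum_{k}(k-1)\binom nk$ vanishes, so we may sum over $k=0,\dots,n$ and subtract only the $k=0$ term, which equals $-1$:
\[ \sum_{k=2}^n(k-1)\binom nk=\sum_{k=0}^n(k-1)\binom nk+1. \]
Using $\sum_k k\binom nk=n2^{n-1}$ and $\sum_k\binom nk=2^n$, the right-hand side becomes $n2^{n-1}-2^n+1=(n-2)2^{n-1}+1$. As a sanity check, $n=2$ gives $1$ and $n=3$ gives $5$.

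\textbf{Main obstacle.} The only step needing care is Step 1, namely justifying the ``$-1$''. We need the affine hull of the vertices not to contain the origin, and this is exactly what the normalization functional above guarantees. Everything else follows directly from Proposition \ref{prop:choice-filtration}.
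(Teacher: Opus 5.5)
Your proof is correct and follows the paper's argument. You telescope the increments of Proposition~\ref{prop:choice-filtration} to get $\dim\V_{\le n}$, identify this with the rank of the vertex matrix, and subtract one because the normalization $\sum_{a\in B}\widetilde f^B_a=\one$ puts every vertex on an affine hyperplane avoiding the origin. One small correction to Step 1: the functional must take a \emph{nonzero} constant value on the columns (one vanishing on all of them would not suffice), which the functional you exhibit does; your explicit binomial evaluation of the closed form also fills in a computation the paper leaves to the reader.
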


\begin{proof}
By Proposition~\ref{prop:choice-filtration} and the simple fact that $\dim \V_1=1$, we have 
$$ \dim V_{\le n} = 1+\sum_{k=2}^n(k-1)\binom{n}{k}. $$
The space $V_{\le n}$ is the row space of the matrix whose columns are the deterministic choice vectors indexed by rankings in $\Pi_n$. Moreover, the constant function belongs to this row space, since for every nonempty $B$,
$$ \sum_{a\in B}f^B_a=\mathbf{1}.$$
It follows that the affine dimension of the convex hull of the deterministic choice vectors is one less than the rank of this matrix. Hence $$ \dim \mathcal{C}_n = \sum_{k=2}^n(k-1)\binom{n}{k}=  (n-2)2^{n-1}+1. $$
\end{proof}

\section{Proof of Theorem \ref{thm:main}} Let us start by setting some notation and then explain the main differences between the proofs of Theorem \ref{thm:main} and
Theorem \ref{thm:choice-dimension}. In this section will use $\E_B$ and $ \widetilde{ E}_B$ to denote vector spaces defined in the best-choice setting. We hope that this overlap in notation does not confuse the reader. 

For a set $B$ with $k:=|B|\geq2$, let $$ \E_B = \operatorname{span}\{f^B_{a,b}:a,b\in B,\ a\neq b\} \subseteq \RR^{\Pi_B}, $$
where as in the previous section we have 
$$ f^B_{a,b}(\pi)=\mathbf{1}_{\{a\text{ and } b \text{ are the best and worst elements of } B\text{ under }\pi\} }= \begin{cases} 1 & \textrm{if } \, \pi_1=a, \pi_k=b\\
0 & \,  \textrm{otherwise } 
\end{cases}     $$

Recall that for every $h \in \RR^{ B^{(2)}}$, we define $\ch(h) \in \RR^\Pi$ by  $\ch(\pi)= h( \pi_1, \pi_2)$. 
Let $\{ \e_{a,b} \}_{ (a, b) \in B^{(2)}}$ be the canonical basis of $\RR^{ B^{(2)}}$. It is easy to see that  $f_{a,b}^B= \ch{\e_{a,b}}$
This implies that the space of functions on the rankings determined by the best and the worst choice is given by  
$$\E_B= \ch(\RR^{ B^{(2)}}) \subseteq  \RR^{\Pi_{ B }}.$$ One can verify that  the $|B|(|B|-1)$ functions $f^B_{a,b}$ are linearly independent. Hence
$$\dim \E_B=|B|(|B|-1).$$

We will now turn to the proof of Theorem \ref{thm:main}. The proof has the same structure, but the technical details are more involved. 
One essential difference between the proof involves the intersection structure of $\E_B\cap \L_B$. In the ordinary choice model, using the possible best marginals that 
can arise from elements in $L_B^\perp$ (Lemma~\ref{lem:choice-first-marginal}) we deduce that the only element of $\E_B$ coming from proper-subset of $B$ (that is $\E_B \cap \L_B)$ are constant. This information was finally assembled to compute the dimension of $\V_n$. 

As we shall see the possible best-worst marginals coming from $\L_B^\perp$ has a more complicated structure (see Lemma \ref{lem:endpoint-image}). The additional constraints on these marginals, will reduce the intersection dimensions (Lemma \ref{prop:local}). 
The filtration argument itself is essentially unchanged when passing from ordinary choice to best--worst choice. We will elaborate on some (minor) differences in the course of the proof.

\medskip

In order to avoid too many indices, for a ranking $\pi$ of $B$ and a nonempty set $C\subseteq B$, write $\operatorname{best}_\pi(C)$ and $\operatorname{worst}_\pi(C)$ for the best and worst elements of $C$ in the ranking of $C$ induced by $\pi$.  More precisely, we set $$\operatorname{best}_\pi(C)= (\pi|_C)_1, \quad \text{ and } \quad  \operatorname{worst}_\pi(C):= (\pi|_C)_{|C|}.$$

The next lemma must be contrasted with Lemma \ref{lem:choice-local}. It shows that contrary to the best case, the intersection $\L_B \cap \E_B$ contains non-constant functions. 

\begin{lemma}\label{lem:alternating}
Let $B$ be a set of cardinality $k\geq2$ and let $u\in\RR^B$. For  $\pi=(\pi_1,\ldots,\pi_k)\in\Pi_B$, we have
$$ (\chb+ (-1)^k \chw)(u)= u_{\pi_1}+(-1)^k u_{\pi_k} = \sum_{\substack{C\subseteq B\\ 1 \le |C| \le k-1}} (-1)^{|C|+1} u_{\operatorname{worst}_\pi(C)}. $$
In particular, the image of $\RR^B$ under the linear map $\chb+ (-1)^k \chw$ is included in $\L_B$.
\end{lemma}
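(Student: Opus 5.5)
The plan is to verify the identity pointwise, for a fixed ranking $\pi\in\Pi_B$, by regrouping the right-hand side according to which element of $B$ is $\operatorname{worst}_\pi(C)$. The inclusion into $\L_B$ will then be read off from the shape of the formula.

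Write $\pi=(\pi_1,\dots,\pi_k)$. For a fixed $j$, I will count the subsets $C\subseteq B$ with $\operatorname{worst}_\pi(C)=\pi_j$. Such a $C$ must contain $\pi_j$, may contain any subset of $\{\pi_1,\dots,\pi_{j-1}\}$, and contains nothing from $\{\pi_{j+1},\dots,\pi_k\}$. So $C=\{\pi_j\}\cup D$ with $D\subseteq\{\pi_1,\dots,\pi_{j-1}\}$ and $|C|=|D|+1$. Summing over all nonempty $C$ (temporarily allowing $C=B$), the coefficient of $u_{\pi_j}$ is
\[ \sum_{D\subseteq\{\pi_1,\dots,\pi_{j-1}\}}(-1)^{|D|+2}=\sum_{m=0}^{j-1}\binom{j-1}{m}(-1)^m, \]
which equals $1$ for $j=1$ and $0$ for $j\ge2$.

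This gives $\sum_{\emptyset\ne C\subseteq B}(-1)^{|C|+1}u_{\operatorname{worst}_\pi(C)}=u_{\pi_1}$. The term with $C=B$ is $(-1)^{k+1}u_{\pi_k}$. Removing it from both sides yields
\[ \sum_{1\le|C|\le k-1}(-1)^{|C|+1}u_{\operatorname{worst}_\pi(C)}=u_{\pi_1}+(-1)^k u_{\pi_k}, \]
and the left side of this is exactly the right-hand side of the lemma. The only point needing care is the sign bookkeeping when moving the $C=B$ term across; I expect this to be the main (mild) obstacle, and the rest is routine.

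For the ``in particular'' statement, fix a proper nonempty $C$. The function $\pi\mapsto u_{\operatorname{worst}_\pi(C)}$ depends only on $\pi|_C$, because it equals $\Lift^B_C(\chw_C(u|_C))$, where $\chw_C$ denotes the worst-choice operator on $\Pi_C$. Hence it lies in $\U^B_C\subseteq\L_B$. Since $\L_B$ is a subspace, the finite signed sum of these functions also lies in $\L_B$. Therefore $(\chb+(-1)^k\chw)(u)\in\L_B$ for every $u\in\RR^B$.
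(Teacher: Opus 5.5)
Your proof is correct and is the paper's argument: group the right-hand side by which $\pi_j$ equals $\operatorname{worst}_\pi(C)$, count such $C$ via subsets of $\{\pi_1,\dots,\pi_{j-1}\}$ and use $(1-1)^{j-1}=0$, then get membership in $\L_B$ from each term lying in $\U^B_C$ for a proper $C$. Your variation of first summing over all nonempty $C$ and then removing the $C=B$ term makes the sign bookkeeping cleaner, and your signs are right (the paper's displayed coefficient should carry $(-1)^{j+1}$ rather than $(-1)^j$).
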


\begin{proof}
Write $\pi=(x_1,\ldots,x_k)$. For a fixed $1 \le r \le k$, the term $u_{x_r}$ occurs in the sum over $j$-element subsets precisely when $x_r$ is the worst element of the chosen subset. Such a subset is obtained by choosing $j-1$ elements from $\{x_1,\ldots,x_{r-1}\}$ and adjoining $x_r$. Hence the coefficient of $u_{x_r}$ on the right-hand side is
$\sum_{j=1}^{k-1}(-1)^j\binom{r-1}{j-1}$.

For $r=1$ this coefficient equals $1$. For $2\leq r\leq k-1$, it is $(1-1)^{r-1}=0$. Finally, for $r=k$ it equals
$-(-1)^{k-1}=(-1)^k$. This proves the identity. Every set $C$ occurring on the right has at most $k-1$ elements, and  is  hence a proper subset of $B$, which proves the last assertion.
\end{proof}

\begin{remark}
 Recall the definition of the marginal operators given in \eqref{maginaldef}
\[ \mar: \RR^{\Pi_B} \to \RR^{B^{(2)}} , \quad  \mar(h)(a,b)= \sum_{ \pi_1= a, \pi_k=b} h(\pi) \]
The {\it partial sum operators} $ \psum_1, \psum_2: \RR^{ B^{(2)}} \to \RR^B$ are defined by 
\[ \psum_1( u)(a)= \sum_{y \in B \setminus \{ a \} } u(a,y), \qquad \psum_2( u)(a)= \sum_{x \in B \setminus \{ b \} } u(x,b). \]
Recall from Remark \ref{psums} that $\mar(h)$ can be viewed as a $|B| \times |B|$ matrix. When regarded in this way, the 
commutativity of the diagram below corresponds to the fact that $\marb(h)$ and $\marw(h)$ are simply given as the row sum and column sum of this matrix.  
\end{remark}

\[
\begin{tikzcd}[column sep=large, row sep=large]
& & \RR^B \\
\RR^{\Pi_B}
\arrow[r, "\mar"]
\arrow[urr, "\marb"]
\arrow[drr, "\marw"']
& \RR^{B^{(2)}}
\arrow[ur, "\psum_{1}"']
\arrow[dr, "\psum_2"]
& \\
& & \RR^B
\end{tikzcd}
\]

\begin{lemma}\label{lemma:zsum}
Let $B$ be a set with $k=|B| \ge 4$. Let $ \{ \e_{a,b}: \, (a, b) \in B^{(2)} \}$ denote the canonical basis for $\RR^{ B^{(2)}}$. Let $W$ be the subspace 
of $ \RR^{ B^{(2)}}$ consisting of vectors $v$ satisfying $\psum_1(v)= \psum_2(v)=0$. Then $W$ is equal to the span of the set of vectors
$$ \k_{a,b,c,d }:= (\e_{a,c}+ \e_{b,d}) - \left(  \e_{a,d} + \e_{b,c} \right),  $$
where $a,b, c, d$ are pairwise distinct elements of $B$. Moreover, we have $\dim W=k^2-3k+1$. 
\end{lemma}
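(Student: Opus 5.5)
We view $v\in\RR^{B^{(2)}}$ as a $k\times k$ matrix with zero diagonal. In this picture $W$ is the space of such matrices whose row sums and column sums all vanish. The plan is to compute $\dim W$ by rank counting, and then show that the vectors $\k_{a,b,c,d}$ span a subspace of the same dimension, using the inclusion $\operatorname{span}\{\k_{a,b,c,d}\}\subseteq W$.

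\emph{Dimension count.} Consider the map
\[
(\psum_1,\psum_2):\RR^{B^{(2)}}\to\RR^B\oplus\RR^B .
\]
Its image lies in the hyperplane $\{(x,y):\sum_a x(a)=\sum_b y(b)\}$, because both sides equal the total sum of the entries. I claim the image is exactly this hyperplane, so the map has rank $2k-1$ and
\[
\dim W=k(k-1)-(2k-1)=k^2-3k+1 .
\]
To prove the claim, compute the annihilator of the image. A pair $(x,y)$ pairs to zero with all images if and only if $x(a)+y(b)=0$ for all $a\neq b$. For $k\ge3$, fix $b$ and vary $a\ne b$: this shows $x$ is constant on $B\setminus\{b\}$. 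Since $b$ was arbitrary and $k\ge 3$, $x$ is constant, say $x\equiv c$. Then $y\equiv -c$. So the annihilator is one-dimensional and the rank is $2k-1$.

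\emph{Inclusion.} Each $\k_{a,b,c,d}$ has row $a$ entries $+1$ at $c$ and $-1$ at $d$, and row $b$ entries $+1$ at $d$ and $-1$ at $c$. Likewise each column contains one $+1$ and one $-1$. All indices are distinct, so no diagonal entries occur. Hence $\k_{a,b,c,d}\in W$.

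\emph{Spanning, the main step.} I would show that $W\cap K^\perp=0$, where $K$ is the span of the $\k$'s. Equivalently, I would identify $K^\perp$ and check that its dimension is $2k-1$. Suppose $u\in\RR^{B^{(2)}}$ satisfies
\[
u(a,c)+u(b,d)=u(a,d)+u(b,c)
\]
for all distinct $a,b,c,d$. The goal is to show that $u(a,b)=x(a)+y(b)$ for some $x,y\in\RR^B$. By the annihilator computation above, the set of such $u$ is the image of the adjoint of $(\psi_1,\psi_2)$, which is $W^\perp$. The space of such $u$ has dimension $2k-1$, because $(x,y)\mapsto u$ has kernel $\{(c,-c)\}$. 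Then $K^\perp=W^\perp$ gives $K=W$.

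To produce $x$ and $y$, the natural approach is to fix a base row $r$ and set $y(b)=u(r,b)$ for $b\neq r$. One then wants $x(a)=u(a,b)-u(r,b)$ to be independent of $b$. For $b\notin\{a,r\}$ this follows directly from the four-term relation with distinct indices, which needs $k\ge4$ to have room. The delicate part is the entries in row $r$ and column $r$, where $y(r)$ and $x(r)$ must be defined compatibly, together with the case $b=a$, which is excluded. Here the relations have to be chained through a fifth element when available, or through two intermediate steps using the distinctness afforded by $k\ge4$.

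I expect this bookkeeping with the diagonal holes to be the main obstacle. If it becomes messy, the fallback is a direct basis-style reduction. Given $v\in W$, subtract suitable multiples of $\k_{a,b,c,d}$ to clear all entries outside a small fixed set of positions, for example those involving two fixed indices. Then check by hand that a matrix in $W$ supported on those positions must vanish.
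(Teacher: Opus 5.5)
Your framework is correct, and in two places it is tighter than the paper's. First, the annihilator computation actually proves that the $2k-1$ row and column constraints are independent. The paper's count only exhibits one dependency among the $2k$ constraints, which by itself gives only $\dim W\ge k^2-3k+1$. Second, you identify $W^\perp=\{(a,b)\mapsto x(a)+y(b)\}$ as the image of the adjoint of $(\psum_1,\psum_2)$, so $K^\perp\subseteq W^\perp$ immediately gives $K=W$. The paper, working with $v\in W\cap W_0^\perp$, instead has to use $\psum_1(v)=\psum_2(v)=0$ again at the end to force $\alpha,\beta$ constant and $v=0$. However, the one step that carries the real content is not carried out: that every $u$ satisfying the four-term relations has the form $x(a)+y(b)$. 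You flag row and column $r$ as ``delicate'' and suggest a fifth element may be needed, which would be unavailable for $k=4$. You then fall back on an unspecified elimination. As written, the spanning claim is unproved.

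The step closes in a few lines, with no fifth element. For $a\ne r$ you have shown that $x(a):=u(a,b)-u(r,b)$ does not depend on $b\notin\{a,r\}$; set $x(r):=0$. Given distinct $a,a'\in B\setminus\{r\}$, pick $b\notin\{a,a',r\}$; this is exactly where $k\ge4$ is used. Apply the relation to the distinct quadruple $(a,a',r,b)$ to get $u(a,r)+u(a',b)=u(a,b)+u(a',r)$. Rewriting this as $u(a,r)-\bigl(u(a,b)-u(r,b)\bigr)=u(a',r)-\bigl(u(a',b)-u(r,b)\bigr)$ gives $u(a,r)-x(a)=u(a',r)-x(a')$, so $y(r):=u(a,r)-x(a)$ is well defined. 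Now $u(a,b)=x(a)+y(b)$ holds in each case $a=r$; $a,b\ne r$; $b=r$. The diagonal never needs checking, since it is not a coordinate. This is precisely the paper's cocycle identity $d(a,b)+d(b,c)=d(a,c)$, proved by choosing $j\notin\{a,b,c\}$; your base row $r$ plays the role of the paper's base point $c$ in $\alpha(a)=d(a,c)$. With this inserted, your proof is complete.
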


\begin{proof}
Denote by $W_0$ the subspace spanned by all $\k_{a,b,c,d }$ where $a,b, c, d$ are pairwise distinct elements of $B$.
First, note that $\psum_1 (\e_{x,y})=\e_x$ and $\psum_2(e_{x,y})= \e_y$  for all $(x, y) \in B^{(2)}$. This implies  $\psum_1(k_{a,b,c,d })= \psum_2(k_{a,b,c,d})= 0$. 
This implies that $W_0 \subseteq W$. In order to prove the reverse inclusion, consider the orthogonal complement $W_1$ of $W_0$ inside $W$. In other words,
suppose $v \in W$ is orthogonal to all elements of $W_0$. We show that  $v=0$. Since $ \langle v, \e_{x,y} \rangle= v(x,y)$, the orthogonality condition implies
$$ v(a,c)- v(b,c) =v(a,d) - v(b,d). $$
whenever $a,b,c,d$ are pairwise distinct. For fixed distinct $a,b$, this implies that the function $x \mapsto v(a,x)- v(b,x)$ 
is constant on $ B \setminus \{a,b\}$; denote this constant by $d(a,b)$. If $a,b,c$ are distinct, choose $j$ outside $\{a,b,c\}$, which is possible because $k\geq4$. This implies that  
\[ d(a,b)+d(b,c)= v(a,j)- v(b, j) + v(b, j)- v(c,j)= v(a, j)- v(c, j)= d(a, c). \]
Also, 
$$d(a, c)= v(a, j)- v(c,j)= - (v (c, j)- v( a, j) )= -d (c,a).$$

We claim that these two properties of $d$ imply that  there exists a function  $\alpha: B \to \RR$ such that $d(a,b)=\alpha(a)-\alpha(b)$ for distinct $a,b$ in $B$. In fact, fix 
$c \in B$ and for $a \in B \setminus \{ c \}$ define $\alpha(a)= d(a, c)$ and extend $\alpha$ to $B$ by setting $\alpha(c)=0$. For $a, b$ both distinct from $c$, we have 
\[ d(a, b)= d(a,c)- d(b, c)= \alpha(a)- \alpha(b). \]
Finally, since $\alpha(c)=0$, we have $d(a, c)= \alpha(a)- \alpha(c)$ and $d(c,b)= - d(b,c)= \alpha(c)- \alpha(b)$.
Putting these together, we deduce that for a fixed $b \in B$, and $a, a' \in B \setminus \{ b \}$, we have
\[ v(a, b)- v(a',b)= d(a, a')= \alpha(a)-\alpha(a') \hspace{2mm} \Rightarrow \hspace{3mm}  v(a, b)- \alpha(a)= v(a',b)-\alpha(a'). \]
Hence, the function $x \mapsto v(x, b)- \alpha(x)$ is constant on $ B \setminus \{b \}$. Denote its values by $\beta(b)$. Hence, we have
$v(a, b)= \alpha(a)+ \beta(b)$ for all $(a, b) \in B^{(2)}$.   
Now, we have for every $ a \in B$, 
\[ 0= \psum_1( v)(a)= \sum_{ y \neq a} \alpha(a)+ \beta(y)= (|B|-1) \alpha(a)- \beta(a)+  \sum_{y \in B} \beta(y). \]
Fixing $a$ and letting $b$ vary, we deduce that the function $x \mapsto (|B|-1) \alpha(x)- \beta(x)$ is a constant function.  Similarly, 
the function $x \mapsto (|B|-1) \beta(x)- \alpha(x)$ is also constant. Since $|B|>2$, we deduce that $\alpha$ and $\beta$ are both constant. Denote the
common value of these two functions by $A_1$ and $A_2$, we deduce that $ 0= (|B|-1) (A_1+A_2)$. Hence $v(x, y)= A_1+A_2=0$. This proves the claim. 

The dimension count is straightforward and is best seen if we view $v$ as a $k$ by $k$ matrix with zero diagonal entries with the constrains that row sums
and column sums are all zero. There are $k^2-k$ entries in $v$. Since each row must add up to $0$, we obtain $k$ additional constraints. Similarly, 
we have $k$ constraints coming from the column sums. Both sets of constraints imply that all matrix entries add up to $0$. Hence $ \psum_1(v)= \psum_2(v)=$ imposes a total of $2k-1$ linear constraints on $v$. Hence 
$\dim W= (k^2-k)-(2k-1)= k^2-3k+1$. 
\end{proof}

The next lemma is an analog of Lemma \ref{lem:choice-first-marginal} for the best-worst choice polytope. 

\begin{lemma}\label{lem:endpoint-image}
Let $B$ have cardinality $k\geq4$. Then
\begin{equation}\label{marginalimage}
\mar (\L_B^\perp) = \RR^{ B^{(2)}}_0 \cap \left\{ v \in \RR^{ B^{(2)}}:  \psum_1(v)+ (-1)^{k} \psum_2(v)=0 \right\}. 
\end{equation}
Moreover, we have $ \dim \mar (\L_B^\perp)=k(k-2).$
\end{lemma}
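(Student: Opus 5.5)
We prove the two inclusions separately and then count dimensions.

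\emph{The inclusion ``$\subseteq$''.} Let $h\in\L_B^\perp$. Since $\one\in\L_B$ and $\ch(\one)=\one$, Lemma~\ref{adjoint} gives $\langle\mar(h),\one\rangle=\langle h,\one\rangle=0$, so $\mar(h)\in\RR^{B^{(2)}}_0$. For the second condition, take any $u\in\RR^B$. Because $\psum_1$ and $\psum_2$ are adjoint to $u\mapsto u(a)$ and $u\mapsto u(b)$ on $B^{(2)}$, the commutative diagram together with Lemma~\ref{adjoint} yields
\[ \langle \psum_1(\mar h)+(-1)^k\psum_2(\mar h),u\rangle_B=\langle h,(\chb+(-1)^k\chw)(u)\rangle_{\Pi_B}. \]
By Lemma~\ref{lem:alternating}, $(\chb+(-1)^k\chw)(u)$ lies in $\L_B$, so the right-hand side vanishes. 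As $u$ is arbitrary, $\psum_1(v)+(-1)^k\psum_2(v)=0$ for $v=\mar(h)$.

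\emph{Dimension of the right-hand side of \eqref{marginalimage}.} Call this space $R$. Consider the linear map $v\mapsto\psum_1(v)+(-1)^k\psum_2(v)$ from $\RR^{B^{(2)}}_0$ into $\RR^B$.
\begin{itemize}
\item If $k$ is even, its image lies in $\RR^B_0$. Moreover $v=\e_{a,c}-\e_{b,c}$ maps to $\e_a-\e_b$, so the image is exactly $\RR^B_0$, of dimension $k-1$.
\item If $k$ is odd, the image is still contained in $\RR^B_0$. The vector $\e_{a,c}-\e_{b,c}$ maps to $\e_a-\e_b$ again, so the image is again $\RR^B_0$.
\end{itemize}
In both cases
\[ \dim R=(k^2-k-1)-(k-1)=k^2-2k=k(k-2), \]
which is the claimed value.

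\emph{The inclusion ``$\supseteq$'' (main obstacle).} It suffices to exhibit a spanning set of $R$ consisting of marginals of elements of $\L_B^\perp$. We take the spanning set to be $W$ from Lemma~\ref{lemma:zsum} together with vectors whose images under $\psum_1$ run through $\RR^B_0$. A convenient choice of the latter is
\[ \e_{a,c}-\e_{b,c}-(-1)^k(\e_{c,a}-\e_{c,b}), \]
whose $\psum_1$-image is $\e_a-\e_b-(-1)^k\cdot 0$, up to correction.

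For each such target vector we build an explicit $h\in\L_B^\perp$, modelled on the construction in Lemma~\ref{lem:choice-first-marginal}: $h$ is supported on rankings of the shape $x\bullet\pup_{C_1}\bullet(\text{pair})\bullet\pdn_{C_2}\bullet y$ with prescribed endpoints, and carries signs $(-1)^{|C_1|}$. We then check $h\in\L_B^\perp$ via Proposition~\ref{ortho}(4), by pairing insertions of each $i$ exactly as in that proof, including the new cases where $i$ is one of the fixed endpoints. Finally, $\mar(h)$ is read off from the unique rankings in the support having each endpoint pair.

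We expect this verification to be the hard step, especially producing the $\k_{a,b,c,d}$ directly. If it becomes unmanageable, the fallback is to bound $\dim\mar(\L_B^\perp)$ from below by computing $\dim(\E_B\cap\L_B)$ via Lemma~\ref{lem:alternating}. Equality in the inclusion then follows from the dimension count above combined with $\dim\mar(\L_B^\perp)=\dim\E_B-\dim(\E_B\cap\L_B)$.
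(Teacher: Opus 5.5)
Your proof of ``$\subseteq$'' is the paper's. Your count $\dim R=(k^2-k-1)-(k-1)=k(k-2)$ is correct, and it has the advantage of not depending on the hard inclusion. The gap is the reverse inclusion, which is the real content of the lemma, and neither of your two routes proves it.

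The sketched construction cannot work as described, with fixed endpoints $x,y$. Suppose $h$ is supported on rankings whose first element is $x$, and take $i=x$ and any $\tau\in\Pi_{B\setminus\{x\}}$. Only the insertion of $x$ at the front meets the support, so $\langle h,\sum_{\pi\setminus x=\tau}\e_\pi\rangle=h(x\bullet\tau)$. Since $\sum_{\pi\setminus x=\tau}\e_\pi\in\U_{B\setminus\{x\}}\subseteq\L_B$, membership in $\L_B^\perp$ forces $h(x\bullet\tau)=0$ for every $\tau$, i.e.\ $h=0$.

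The fallback runs in the wrong direction. Lemma \ref{lem:alternating} only exhibits elements of $\E_B\cap\L_B$. It therefore bounds $\dim(\E_B\cap\L_B)$ from below, and hence bounds $\dim\mar(\L_B^\perp)=\dim\E_B-\dim(\E_B\cap\L_B)$ from above, which ``$\subseteq$'' already gives. A lower bound on $\dim\mar(\L_B^\perp)$ requires producing enough elements of $\L_B^\perp$, and that is exactly what is missing. In the paper, Proposition \ref{prop:local} is deduced from this lemma, so this route would also be circular.

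Two ideas close the gap.

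First, no construction is needed outside the space $W$ of Lemma \ref{lemma:zsum}. Given $v\in R$, Lemma \ref{lem:choice-first-marginal} gives $h_0\in\L_B^\perp$ with $\marb(h_0)=\psum_1(v)$. By ``$\subseteq$'' we have $\mar(h_0)\in R$. Hence $v-\mar(h_0)$ has $\psum_1=0$, and therefore also $\psum_2=0$, so it lies in $W$.

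Second, each $\k_{a,b,c,d}$ comes from a block product.
\begin{itemize}
\item Split $B=A\sqcup C$ with $a,b\in A$, $c,d\in C$ and $|A|,|C|\ge2$.
\item Take $h_A\in\L_A^\perp$ with $\marb(h_A)=\e_a-\e_b$, and $h_C\in\L_C^\perp$ with $\marw(h_C)=\e_c-\e_d$. The latter exists by the reversed version of Lemma \ref{lem:choice-first-marginal}.
\item Set $h(\sigma\bullet\tau)=h_A(\sigma)h_C(\tau)$ for $\sigma\in\Pi_A$, $\tau\in\Pi_C$, and $h=0$ on all other rankings.
\end{itemize}
Now delete some $x\in A$. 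If the remaining ranking is not of the form $\tau_A\bullet\tau_C$, every reinsertion of $x$ has value $0$. Otherwise only insertions inside the $A$-block survive, and the insertion sum is $h_C(\tau_C)\sum_{\sigma\setminus x=\tau_A}h_A(\sigma)=0$. The case $x\in C$ is symmetric, so $h\in\L_B^\perp$.

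Moreover $\mar(h)(s,t)=\marb(h_A)(s)\,\marw(h_C)(t)$ for $s\in A$, $t\in C$, and $\mar(h)=0$ on all other pairs. This is precisely $\k_{a,b,c,d}$. Lemma \ref{lemma:zsum} then gives $W\subseteq\mar(\L_B^\perp)$, and combined with the first idea this yields $R\subseteq\mar(\L_B^\perp)$.
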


\begin{proof}
First note that for every $h \in \L_B^\perp$ we have 
\[ \langle \mar(h), \one \rangle_{ B^{(2)}} = \langle h, \ch(\one) \rangle_{\Pi_B}= \langle  h, 1 \rangle_{\Pi_B} =0, \]
where the latter follows from $\one \in \L_B$. This shows that $  \mar (\L_B^\perp) \subseteq \RR^{ B^{(2)}}_0$.
To prove the second inclusion, suppose $v= \mar(h)$ for some $h \in \L_B^\perp$. Then using Lemma \ref{lem:alternating}
 for every $u \in \RR^B$ we have 
\begin{equation}
\begin{split}      
\langle \psum_1 (v) + (-1)^k \psum_2( v), u \rangle &= \langle  \psum_1 (\mar(h)) + (-1)^k \psum_2( \mar(h)), u  \rangle \\
&= \langle  \marb(h), u  \rangle + \langle \marw(h),  u \rangle \\
&= \langle h, (\chb+ (-1)^k \chw)(u) \rangle =0
\end{split}
\end{equation}
We have thus established the inclusion of $\mar (\L_B^\perp)$ in the right-hand side. We will now prove the reverse inclusion. 
Set   
$$ \Z_B = \{h \in \RR^{ B^{(2)}} : \psum_1(h)=0,  \, \psum_2(h)=0\}. $$

We claim that $\Z_B\subseteq  \mar (\L_B^\perp) $. Take four distinct elements $a,b,c,d\in B$, and choose a partition $B=A\sqcup C$ with $a,b\in A$, $c,d\in C$, and $|A|,|C|\geq2$. Using Lemma \ref{lem:choice-first-marginal}, there exist
$h_A\in \L_A^\perp$ with $\marb(h_A)= \e_a - \e_b$. Similarly, there exists  $h_C \in \L_C^\perp$ such that 
$\marw(h_C)=\e_c- \e_d$. 
Define $h$ on $\Pi_B$ first by setting for every $ \sigma \in \Pi_A$ and $\tau \in \Pi_C$ 
$$ h(\sigma \bullet \tau)=h_A(\sigma)h_C(\tau) $$
and setting it to be zero on the rest of $\Pi_B$.  We claim that $h$ defined as above belongs to $\L_B^\perp$. In order to show this, we need to prove for every $x \in B$ and every $ \tau \in \Pi_{ B \setminus \{ x \}}$  we have 
\[ \sum_{ \pi: \pi \setminus x = \tau } h( \pi) =0. \]

First, assume that $x\in A$. If the remaining ranking does not have the block $A\setminus\{x\}$ before the block $C$, every insertion has value zero; otherwise the nonzero part of the insertion sum is the value of $h_C$ on the fixed $C$-block multiplied by an insertion sum for $h_A$. The case of deleting an element of $C$ is symmetric. Hence $h\in \L_B^\perp$. This construction gives
$$ \mar(h)=  \e_{a,c}+ \e_{b,d} - \left(  \e_{a,d} + \e_{b,c} \right)= \k_{a,b,c,d }. $$
It follows that  $\mar(\L_B^\perp)$ contains every $\k_{a,b,c,d }$ with $a,b,c,d$ pairwise distinct elements of $B$.  Lemma \ref{lemma:zsum} implies that 
$\mar(\L_B^\perp)$  contains $\Z_B$. 
To finish the proof, let $v$ belong to the right-hand side of \eqref{marginalimage}. We need to show the existence of $ h \in \L_B^\perp$ with 
$\mar(h)=v$.  Since $$ \langle \psum_1 (v), \one \rangle = \langle v, \one \rangle =0,$$ by Lemma \ref{lem:choice-first-marginal}, there exists $h_0\in \L_B^\perp$ with $\marb(h_0)= \psum_1(v)$. This implies that 
\[ \psum_1( v - \mar (h_0) ) = \psum_1(v)- \marb(h_0)=0. \]
Since $ v - \mar (h_0)$ satisfies the equation $\psum_1(v)+ (-1)^{k} \psum_2(v)=0$, we deduce that 
$$ \psum_2( v - \mar (h_0) ) = (-1)^{k+1}  \psum_1( v - \mar (h_0) ) =0.$$
It follows that $v - \mar (h_0) \in \Z_B$. By the above discussion, there exists $h_1 \in \L_B^\perp$ with $\mar(h_1)= v - \mar (h_0) $. We deduce
$ v= \mar ( h_0+ h_1)$, finishing the first claim. Note that we have proven that the kernel of the map $\psum_1$ from $\mar( \L_B^\perp)$ to $\RR^B$ has kernel
$\Z_B$ and image $\RR^B_0$. Hence, 
\[ \dim \mar( \L_B^\perp)= \dim \Z_B+ \dim \RR^B_0= (k^2-3k+1)+(k-1) = k(k-2). \]
\end{proof}

The next proposition is the analog of Proposition \ref{lem:choice-local}. 

\begin{proposition}\label{prop:local}
Suppose $k=|B|\geq4$. Then
$$
\dim \E_B/( \E_B\cap \L_B)=k(k-2).
$$
More precisely, a function $g\in \E_B$ belongs to $\E_B\cap \L_B$ if and only if it has one of the following forms:
\begin{enumerate}
\item if $k$ is even,
$g(\pi)=\alpha_{\pi_1}+\alpha_{\pi_k}$ for some $\alpha \in\RR^B$;
\item if $k$ is odd,
$g(\pi)=\lambda+\alpha_{\pi_1}-\alpha_{\pi_k}$ for some $\lambda\in\RR$ and $\alpha \in\RR^B$.
\end{enumerate}
Moreover, every function in $\E_B\cap \L_B$ belongs to $ \sum_{C\subsetneq B} \E_C$.
\end{proposition}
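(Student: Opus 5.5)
We follow the pattern of Lemma \ref{lem:choice-local}, replacing Lemma \ref{lem:choice-first-marginal} by Lemma \ref{lem:endpoint-image}.

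\textbf{Characterizing the intersection.} Let $g=\ch(u)\in\E_B$ with $u\in\RR^{B^{(2)}}$. Since $\L_B$ is a subspace of a finite-dimensional inner product space, $g\in\L_B$ if and only if $\langle g,h\rangle=0$ for all $h\in\L_B^\perp$. By Lemma \ref{adjoint}, this is equivalent to $\langle u,\mar(h)\rangle=0$ for all $h\in \L_B^\perp$, that is, $u\perp \mar(\L_B^\perp)$. Since $\ch$ is injective, we get
$$\E_B\cap\L_B=\ch\bigl(\mar(\L_B^\perp)^\perp\bigr),$$
and therefore
$$\dim \E_B/(\E_B\cap\L_B)=k(k-1)-\dim\mar(\L_B^\perp)^\perp=\dim\mar(\L_B^\perp)=k(k-2)$$
by Lemma \ref{lem:endpoint-image}.

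\textbf{Explicit forms.} By \eqref{marginalimage}, the space $\mar(\L_B^\perp)$ is the kernel of the map $v\mapsto(\langle v,\one\rangle,\ \psum_1(v)+(-1)^k\psum_2(v))$. Its orthogonal complement is therefore the image of the adjoint map. The adjoint of $\psum_1$ sends $\alpha$ to $(a,b)\mapsto\alpha(a)$, and the adjoint of $\psum_2$ sends $\alpha$ to $(a,b)\mapsto\alpha(b)$. Hence
$$\mar(\L_B^\perp)^\perp=\{u:\ u(a,b)=\lambda+\alpha(a)+(-1)^k\alpha(b),\ \lambda\in\RR,\ \alpha\in\RR^B\}.$$
Applying $\ch$ gives $g(\pi)=\lambda+\alpha_{\pi_1}+(-1)^k\alpha_{\pi_k}$. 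For $k$ odd this is form (2). For $k$ even, the constant $\lambda$ is absorbed by replacing $\alpha$ with $\alpha+\lambda/2$, which gives form (1).

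\textbf{Dimension check.} As a consistency check, the space of $u$ above has dimension $k+1$ when $k$ is odd, because $\lambda+\alpha(a)-\alpha(b)$ determines $\alpha$ only up to adding a constant. It has dimension $k$ when $k$ is even, because $\alpha(a)+\alpha(b)$ determines $\alpha$ since $k\ge3$. In both cases this gives $k(k-1)-k(k-2)=k$, which matches $\dim \E_B\cap\L_B=k$ in both parities.

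\textbf{Last assertion.} It remains to show that every such $g$ lies in $\sum_{C\subsetneq B}\E_C$, where we regard $\E_C$ inside $\RR^{\Pi_B}$ via $\Lift^B_C$. The constant $\lambda$ poses no difficulty: for any $2$-element $C\subset B$, the functions $f^C_{x,y}$ sum to $\one$. For the $\alpha$-part, apply Lemma \ref{lem:alternating} to $u=\alpha$:
$$\alpha_{\pi_1}+(-1)^k\alpha_{\pi_k}=\sum_{1\le|C|\le k-1}(-1)^{|C|+1}\alpha_{\operatorname{worst}_\pi(C)}.$$
Terms with $|C|=1$ are constants. For $|C|\ge2$, the function $\pi\mapsto\alpha_{\operatorname{worst}_\pi(C)}$ equals $\sum_{(x,y)\in C^{(2)}}\alpha(y)\,f^C_{x,y}$ lifted to $\Pi_B$, so it lies in $\E_C$. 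Hence $g\in\sum_{C\subsetneq B}\E_C$.

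\textbf{Main obstacle.} All of the real difficulty lies in Lemma \ref{lem:endpoint-image}, which we take as given. The only delicate points remaining here are the adjoint computation of the orthogonal complement and the parity bookkeeping for $\lambda$.
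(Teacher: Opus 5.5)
Your proof is correct and follows the paper's route: you use Lemma \ref{adjoint} and the injectivity of $\ch$ to get $\E_B\cap\L_B=\ch\bigl(\mar(\L_B^\perp)^\perp\bigr)$, take the dimension from Lemma \ref{lem:endpoint-image}, and handle the last assertion with Lemma \ref{lem:alternating}. Your description of the complement as the image of the adjoint of $v\mapsto(\langle v,\one\rangle,\psum_1(v)+(-1)^k\psum_2(v))$ is a little cleaner than the paper's exhibit-orthogonal-vectors-and-count-dimensions argument, and it treats both parities at once. One slip in your (unnecessary) consistency check: for $k$ odd the space of $u(a,b)=\lambda+\alpha(a)-\alpha(b)$ has dimension $k$, not $k+1$, because the $(k+1)$-dimensional parameter space $\RR\times\RR^B$ maps onto it with the one-dimensional kernel $\{\lambda=0,\ \alpha\text{ constant}\}$.
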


\begin{proof}
It is clear that $ \RR \one \subseteq \E_B \cap \L_B$. To prove the reverse inclusion, let $g\in \E_B \cap \L_B$. Since $g \in \E_B$, we can write  $g = \ch(u)$ for some $u \in \RR^B$. 
 Since $g \in \L_B$, we know that $g$ is orthogonal to every element of $\L_B^\perp$. This implies that for every $h \in \L_B^\perp$ we have
 \[ 0 = \langle g, h \rangle_B=   \langle \ch(u), h \rangle_B = \langle u, \mar(h) \rangle_{\Pi_B}. \]
 For notational convenience, write $W_B= \mar ( \L_B^\perp)$. Hence, it follows that $\E_B \cap L_B= \ch( W_B)$. We will give a precise description of $W_B$, which will lead to a description of $\E_B \cap L_B$. 

\medskip

\underline{Case 1: $k$ even}. Suppose first that $k$ is even. Let $\alpha \in \RR^B$ and define $u \in \RR^{ B^{(2)}}$  by $u(a,b) = \alpha(a)+ \alpha(b)$. We claim that $ u$ is orthogonal to 
$\mar ( \L_B^\perp)$. Let $v \in \mar ( \L_B^\perp)$. Then we have 
\begin{equation}
\begin{split}      
 \langle  u, v \rangle  & = \sum_{ (a, b) \in B^{(2)}} v(a, b) ( \alpha(a)+ \alpha (b) )= \sum_a \alpha(a) \sum_{b \neq a} v(a, b)+ \sum_b  \alpha(b) \sum_{a \neq b} v(a,b) \\
&= \sum_a \alpha(a) \psum_1(v) +  \sum_b \alpha(b) \psum_2(v) =  \sum_a \alpha(a) (\psum_1(v) + \psum_2(v) )=0. 
\end{split}
\end{equation}
This gives an injective map from $\RR^B$ to the orthogonal complement of $\mar ( \L_B^\perp)$ in $ \RR^{ B^{(2)}}$. Since this orthogonal complement has dimension 
$k$, this map is surjective. This shows that $ W_B= \{ u(a,b):= \alpha(a)+ \alpha(b): \alpha \in \RR^B \}$. Hence
\[ \E_B \cap \L_B= \ch( W_B)= \{ \alpha_{\pi_1} + \alpha_{\pi_k}:  \alpha \in \RR^B \}. \]
\medskip

\underline{Case 2: $k$ odd}.
Let $\alpha \in \RR^B$ and define $u \in \RR^{ B^{(2)}}$  by $u(a,b) = \alpha(a)- \alpha(b)$. A similar computation shows that $u$ belongs to the orthogonal complement of 
$\mar ( \L_B^\perp)$. The image of these functions determines a $(k-1)$-dimensional subspace of $\RR^{ B^{(2)}}$, as the constant function in $\RR^B$ maps to zero. Together with the constant function $\one$, we obtain a $k$-dimensional subspace orthogonal to $\mar ( \L_B^\perp)$. This finishes the proof in this case.

Finally we will show that the  intersection is generated by lower-order best--worst coordinates. Lemma~\ref{lem:alternating} expresses
$u_{\pi_1}+(-1)^k u_{\pi_k}$ as a linear combination of the functions
$u_{\operatorname{worst}_\pi(C)}$ with $C\subsetneq B$. If $|C|\geq2$, then
$$ u_{\operatorname{worst}_\pi(C)} = \sum_{b\in C}u(b) \sum_{a\in C\setminus\{b\}}f^C_{a,b}(\pi). $$
The contribution from one-element sets in Lemma~\ref{lem:alternating} is constant, and a constant function is already in the span of the two best--worst coordinates associated with any two-element set. The explicit descriptions above therefore show that every element of $\E_B\cap \L_B$ is generated by best--worst coordinates on proper subsets of $B$.
\end{proof}

\begin{remark}\label{impo}
Recall that $\L_B$ is, by definition,  the vector spanned by functions of {\it rankings} of all  proper subsets of $B$. By definition,
$$ \L_B=\sum_{C\subsetneq B} \U_C, $$
On the other hand, the subspace
$$ \sum_{C\subsetneq B} \E_C, $$
consists of functions that are linear combination of {\it best-worst coordinates} of proper subsets of $B$ and is, in general, a proper subspace of $\L_B$. Thus the assertion that every element of $\E_B\cap \L_B$ belongs to this subspace is stronger than the inclusion $E_B\cap L_B\subseteq L_B$. This stronger statement will be necessary later. 
\end{remark}

The case of three alternatives is exceptional.

\begin{lemma}\label{lem:three}
If $|B|=3$, then
$$ \dim \E_B/(\E_B\cap \L_B)=2. $$
Moreover, $\E_B\cap \L_B$ is generated by the best--worst coordinates on the two-element subsets of $B$.
\end{lemma}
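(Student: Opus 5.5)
For $|B|=3$ we argue directly, since both spaces are small enough to compute explicitly. With $k=3$, a ranking of $B$ is determined by its best and worst elements. Hence $\ch\colon\RR^{B^{(2)}}\to\RR^{\Pi_B}$ is a bijection and $\E_B=\RR^{\Pi_B}$, of dimension $6$. The claim $\dim \E_B/(\E_B\cap\L_B)=2$ is therefore equivalent to $\dim \L_B=4$, which was asserted in the example of Section~\ref{sec:lift}. Since $\E_B\cap\L_B=\L_B$ here, it also suffices to show that $\L_B$ is spanned by best--worst coordinates on the two-element subsets.

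To compute $\L_B$, use Proposition~\ref{ortho}(3): $\L_B=\U_{\{1,2\}}+\U_{\{1,3\}}+\U_{\{2,3\}}$ for $B=\{1,2,3\}$. Each $\U_{\{i,j\}}$ is two-dimensional and spanned by $\one$ and the sign function $s_{ij}(\pi)=\pm1$ according to whether $i$ precedes $j$. So $\L_B=\operatorname{span}\{\one,s_{12},s_{13},s_{23}\}$. To see that these four functions are linearly independent, evaluate at the six rankings, or note the following. Every ranking has an even number of non-inversions among the pairs $(1,2),(1,3),(2,3)$ minus something, so argue more simply: $\one$ is orthogonal to each $s_{ij}$. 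The Gram matrix of $s_{12},s_{13},s_{23}$ has diagonal entries $6$ and off-diagonal entries $\pm2$, because two pairs agree on $4$ of the $6$ rankings. Its determinant is nonzero (it equals $6^3-3\cdot6\cdot4\pm16\neq0$), so $\dim\L_B=4$. Alternatively, $\L_B^\perp$ is two-dimensional: by Proposition~\ref{ortho}(4) it consists of $h$ whose insertion sums over each deleted element vanish, and this is a direct check in $\RR^6$.

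For the second assertion, note that for $C=\{i,j\}$ the best--worst coordinates are $f^C_{i,j}=\one\{i\prec j\}$ and $f^C_{j,i}$. Their lifts span exactly $\U_C$, since they span $\RR^{\Pi_C}$. Hence $\L_B=\sum_{|C|=2}\widetilde{\E}_C$ (lifted to $\Pi_B$), which is precisely the generation statement. The only step needing care is the dimension count $\dim\L_B=4$, i.e.\ the independence of $\one,s_{12},s_{13},s_{23}$. This reduces to a $6\times4$ rank check, which I would present by exhibiting a $4\times4$ nonsingular minor, for instance on the rankings $123,132,213,231$.
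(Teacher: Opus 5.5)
Your proof is correct and follows the paper's own argument: $\E_B=\RR^{\Pi_B}$ because best and worst determine a ranking of three elements, so everything reduces to $\dim\L_B=4$, with $\L_B$ spanned by $\one$ and the three pairwise comparison functions, all of which come from best--worst coordinates on two-element sets. The only difference is that the paper merely asserts the linear independence of these four functions, whereas your Gram-matrix computation (determinant $128$) or the $4\times4$ minor on the rankings $123,132,213,231$ actually verifies it. Delete the abandoned half-sentence about non-inversions.
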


\begin{proof}
For three alternatives, the best and worst elements determine the entire ranking. Hence
$\E_B=\RR^{\Pi_B}$ and $\dim \E_B=6$. On the other hand, $\L_B$ is spanned by the constant function and the three pairwise comparison functions. These four functions are linearly independent, so $\dim L_B=4$. Since $\E_B=\RR^{\Pi_B}$, one has $\E_B\cap L_B=L_B$, and the quotient has dimension $2$. Each pairwise comparison function is one of the two deterministic best--worst coordinates on the corresponding two-element set, while the constant is their sum.
\end{proof}

As in the case of best choice, we have a linear isomorphism $ g \mapsto \widetilde{ g}=\Lift_{B}^{[n]}(g)$ from $\RR^{\Pi_B}$ to $\RR^{\Pi_n}$. 
For $B \subseteq [ n]$, define the isomorphic copy of $\E_B$ in $\RR^{\Pi_n}$ via
$$ \widetilde{\E}_B= \Lift_B^{[n]}( \E_B) \subseteq \RR^{\Pi_n}.$$

Note that $ \widetilde{\E}_B$ is spanned by the functions $ \widetilde{ f}_{a,b}^B$ with $(a,b) \in B^{(2)}$ and the map sending $g \in \E_B$ to $ \widetilde{ g}:= \Lift_B^{[n]}(g) \in \widetilde{ E}_B$ defines an isomorphism of vector spaces. We can now introduce a filtration of the vector space $ \RR^{\Pi_n}$ as follows:
We regard $\E_B$ as a subspace of $\RR^{\Pi_n}$ by composing with the restriction map $\pi\mapsto\pi|_B$.  For $1\le k\le n$, define
$$ \V_{\le k} = \RR\one+ \sum_{\substack{B\subseteq[n]\\2\le |B|\le k}}\E_B. $$
Thus
$$ \V_{\le1}\subseteq\V_{\le2}\subseteq\cdots\subseteq\V_{\le n}, $$
where $\V_{\le1}=\RR\one$. Notice that $\one\in\E_B$ whenever $|B|\ge2$, since
$$ \sum_{(a,b)\in B^{(2)}}f^B_{a,b}=\one. $$

\begin{proposition}\label{prop:bw-filtration}
For every $2\le k\le n$,
$$ \V_{\le k}/\V_{\le k-1} \cong \bigoplus_{\substack{B\subseteq[n]\\|B|=k}}\E_B/(\E_B\cap\L_B). $$
Consequently,
$$ \dim\V_{\le k}-\dim\V_{\le k-1} =
\begin{cases}
\binom{n}{2}, & k=2,\\[2mm]
2\binom{n}{3}, & k=3,\\[2mm]
k(k-2)\binom{n}{k}, & k\ge4.
\end{cases} $$
\end{proposition}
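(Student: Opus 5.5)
I will mirror the proof of Proposition \ref{prop:choice-filtration}, with one new ingredient: the last part of Proposition \ref{prop:local} and Lemma \ref{lem:three}, which say that $\E_B\cap\L_B$ lies in $\sum_{C\subsetneq B}\E_C$.

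The first step is to set up the surjection
$$\Phi:\bigoplus_{|B|=k}\E_B\to\V_{\le k}/\V_{\le k-1},\qquad (g_B)_B\mapsto\sum_{|B|=k}\widetilde g_B+\V_{\le k-1}.$$
It is surjective by the definition of $\V_{\le k}$. The claim reduces to showing $\ker\Phi=\bigoplus_{|B|=k}(\E_B\cap\L_B)$.

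For the reverse inclusion $\supseteq$, take $g_B\in\E_B\cap\L_B$. If $k\ge 4$, the final assertion of Proposition \ref{prop:local} puts $g_B$ in $\sum_{C\subsetneq B}\E_C$. If $k=3$, Lemma \ref{lem:three} does the same. After lifting, $\widetilde g_B$ therefore lies in $\V_{\le k-1}$.

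The case $k=2$ needs a separate check. Here $\L_B=\RR\one$, so $\E_B\cap\L_B=\RR\one\subseteq\V_{\le 1}$.

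This is exactly where the best--worst argument departs from the best-choice case. There, $\E_B\cap\L_B$ consisted only of constants. Here it contains nonconstant functions, and Remark \ref{impo} is the reason the stronger containment in $\sum_{C\subsetneq B}\E_C$ is needed.

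For the forward inclusion $\subseteq$, suppose $\sum_{|B|=k}\widetilde g_B\in\V_{\le k-1}$. Fix a $k$-set $A$ and a ranking $\rho$ of $A^c$. Define $\iota_A(\pi)=\pi\bullet\rho$ and $\Comp_A f=f\circ\iota_A$.

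1. $\Comp_A\widetilde g_A=g_A$, because $\widetilde g_A$ depends only on $\pi|_A$.
2. Let $D$ be any set other than $A$ with $|D|\le k$, and let $g_D\in\E_D$.
   1. Under $\iota_A$, every element of $D\cap A$ precedes every element of $D\cap A^c$.
   2. Hence the best element of $D$ is the best element of $D\cap A$ when $D\cap A\ne\varnothing$.
   3. The worst element of $D$ is the worst element of $D\cap A^c$ under the fixed order $\rho$ when $D\cap A^c\neq\varnothing$, and the worst element of $D\cap A$ otherwise. In the latter case $D\subsetneq A$, since $D\neq A$ and $|D|\le k$.
   4. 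So $\Comp_A\widetilde g_D$ depends only on $\pi|_{D\cap A}$, and $D\cap A\subsetneq A$. When $D\cap A=\varnothing$ it is constant.
   5. In all cases $\Comp_A\widetilde g_D\in\U_{D\cap A}\subseteq\L_A$, with $\one\in\L_A$ covering the constant case.
3. Applying step 2 to the $k$-sets $B\ne A$ and to the generators of $\V_{\le k-1}$ gives $g_A\in\L_A$, exactly as before.

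The dimension formula then follows from the local quotient dimensions:

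| $k$ | $\dim\E_B/(\E_B\cap\L_B)$ | source |
|---|---|---|
| $2$ | $2-1=1$ | $\E_B=\RR^{\Pi_B}$ has dimension $2$ and $\L_B=\RR\one$ |
| $3$ | $2$ | Lemma \ref{lem:three} |
| $k\ge4$ | $k(k-2)$ | Proposition \ref{prop:local} |

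Multiplying by $\binom nk$ gives the three cases of the statement.

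The main point requiring care is the $\supseteq$ direction. Its content is already packaged in Proposition \ref{prop:local}, so the obstacle is only to verify that the lifted lower-order best--worst functions there, together with $\one$, genuinely lie in $\V_{\le k-1}$. This holds because $\Lift^{[n]}_B\circ\Lift^B_C=\Lift^{[n]}_C$.
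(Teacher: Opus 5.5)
Your proposal is correct and follows essentially the same route as the paper: the surjection $\Phi$, the compression map $\Comp_A$ with $\iota_A(\pi)=\pi\bullet\rho$ for the inclusion $\ker\Phi\subseteq\bigoplus_{|B|=k}(\E_B\cap\L_B)$, and the containment $\E_B\cap\L_B\subseteq\sum_{C\subsetneq B}\E_C$ for the reverse inclusion. Your explicit handling of $k=2$ (via $\L_B=\RR\one$) and of $k=3$ (via Lemma \ref{lem:three}) in the reverse inclusion is slightly more careful than the paper, which cites only Proposition \ref{prop:local}, a result stated for $k\ge4$.
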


\begin{remark}
As in Proposition~\ref{prop:choice-filtration}, for every $k$-element subset $B\subseteq[n]$ there is a natural map from $\E_B$ to the quotient $\V_{\le k}/\V_{\le k-1}$. Together these maps induce a surjective linear map
$$
\Phi: \bigoplus_{\substack{B\subseteq[n]\\|B|=k}}\E_B \longrightarrow \V_{\le k}/\V_{\le k-1}, \qquad (g_B)_B\longmapsto \sum_{|B|=k}\widetilde{ g}_B+\V_{\le k-1}.
$$
The main point is to prove that
$$ \ker\Phi = \bigoplus_{\substack{B\subseteq[n]\\|B|=k}} (\E_B\cap\L_B). $$
Equivalently, if
$$ \sum_{|B|=k}g_B\in\V_{\le k-1}, $$ then each individual $g_B$ must belong to $\E_B\cap\L_B$.
\end{remark}

\begin{proof}
Suppose first that $g_B\in\E_B$, for $|B|=k$, satisfy
$$ \sum_{\substack{B\subseteq[n]\\|B|=k}}g_B \in\V_{\le k-1}. $$
We will show that $$ g_A\in\E_A\cap\L_A $$ for every $k$-element subset $A\subseteq[n]$.

Fix such an $A$. Choose once and for all a ranking $\rho\in\Pi_{A^c}$. For every $\pi\in\Pi_A$, define
$$ \iota_A(\pi)=\pi\bullet\rho. $$ Thus every element of $A$ occurs before every element of $A^c$, the relative order on $A$ is allowed to vary, and the relative order on $A^c$ is fixed.
As in the best-only case, define the compression map
$$ \Comp_A:\RR^{\Pi_n}\longrightarrow\RR^{\Pi_A}, \qquad \Comp_A(f)=f\circ\iota_A. $$
Thus we have  $$ \Comp_A \widetilde{ g}_A=g_A. $$
The reason for this is that  $g_A$ depends only on the best and worst elements of $A$, and the ranking induced on $A$ by $\iota_A(\pi)$ is precisely $\pi$.

We next analyze $\Comp_Ag_B$ when $B\neq A$ and $|B|=k$. Put
$$ D=A\cap B, \qquad F=B\cap A^c. $$
Since $|A|=|B|=k$ and $A\neq B$, we have $F\neq\varnothing$ and $D\subsetneq A$.

Assume first that $D\neq\varnothing$. Since every element of $A$ precedes every element of $A^c$ in the ranking $\iota_A(\pi)$, we have
$$ \best_B(\iota_A(\pi)) = \best_D(\pi). $$
On the other hand, the worst element of $B$ necessarily belongs to $F$, and since the ranking of $A^c$ is fixed, it is independent of $\pi$. More precisely, if
we set $b_{B,\rho}:=\worst_F(\rho),$ then $$ \worst_B(\iota_A(\pi))=b_{B,\rho}$$ for every $\pi\in\Pi_A$.

Write
$ g_B=\ch(u_B) $ for some $u_B\in\RR^{B^{(2)}}$. It follows that
$$ (\Comp_Ag_B)(\pi) = u_B\bigl(\best_D(\pi),b_{B,\rho}\bigr). $$
Define $v_{B,A}\in\RR^D$ by $$ v_{B,A}(a)=u_B(a,b_{B,\rho}). $$
Then
$$(\Comp_Ag_B)(\pi) = v_{B,A}\bigl(\best_D(\pi)\bigr). $$
Thus the compression of $g_B$ is in fact a best-only function on $D$. In particular,
$$ \Comp_Ag_B\in\U_D\subseteq\L_A. $$

If $D=\varnothing$, then $B\subseteq A^c$. Since the order on $A^c$ is fixed, both the best and worst elements of $B$ are fixed as $\pi$ varies in $\Pi_A$. Therefore $\Comp_Ag_B$ is constant, and hence again belongs to $\L_A$.

We have therefore shown that
$$ \Comp_Ag_B\in\L_A $$
for every $B\neq A$ with $|B|=k$.

We next prove that $$ \Comp_A(\V_{\le k-1})\subseteq\L_A. $$
It is enough to consider an element $g_C\in\E_C$ with $2\le |C|\le k-1$. Put
$D=C\cap A$ and  $F=C\cap A^c.$ There are three cases.

If $F=\varnothing$, then $C\subseteq A$. In this case the compression does not change the induced ranking on $C$, and hence
$$ \Comp_Ag_C=g_C\in\E_C\subseteq\U_C\subseteq\L_A, $$ since $|C|\le k-1<|A|$.

If both $D$ and $F$ are nonempty, write $g_C=\ch(u_C)$. Exactly as above,
$$ \best_C(\iota_A(\pi)) = \best_D(\pi),$$ whereas
$$ \worst_C(\iota_A(\pi)) = b_{C,\rho}, \qquad b_{C,\rho}:=\worst_F(\rho), $$
is independent of $\pi$. Consequently, 
$$ (\Comp_Ag_C)(\pi) = u_C\bigl(\best_D(\pi),b_{C,\rho}\bigr). $$
Thus $\Comp_Ag_C$ depends only on the best element of $D$, and in particular $$ \Comp_Ag_C\in\U_D\subseteq\L_A. $$

Finally, if $D=\varnothing$, then $C\subseteq A^c$, and both endpoints of the induced ranking on $C$ are fixed. Hence $\Comp_Ag_C$ is constant and belongs to $\L_A$.

Since also $\Comp_A(\one)=\one\in\L_A$, these three cases prove $ \Comp_A(\V_{\le k-1})\subseteq\L_A.$

Now set $$ v:= \sum_{\substack{B\subseteq[n]\\|B|=k}}g_B \in\V_{\le k-1}.$$
Applying $\Comp_A$, we obtain
$$ \Comp_Av = g_A+ \sum_{\substack{|B|=k\\B\neq A}}\Comp_Ag_B. $$
As
$ \Comp_Av\in\L_A $ and $\sum_{\substack{|B|=k\\B\neq A}}\Comp_Ag_B\in\L_A, $ we deduce 
$$ g_A = \Comp_Av- \sum_{\substack{|B|=k\\B\neq A}}\Comp_Ag_B \in\L_A.$$
Since $g_A\in\E_A$ by assumption, we conclude that $$ g_A\in\E_A\cap\L_A.$$
As $A$ was arbitrary, this proves
$$ \ker\Phi \subseteq \bigoplus_{\substack{B\subseteq[n]\\|B|=k}} (\E_B\cap\L_B). $$

We now prove the reverse inclusion. Recall that in  the best-only polytope, Lemma~\ref{lem:choice-local} gives
$$ \E_B\cap\L_B=\RR\one, $$ and therefore every element of $\E_B\cap\L_B$ automatically belongs to the first stage of the filtration.

For the best--worst polytope, the intersection $\E_B\cap\L_B$ is larger. We therefore use the stronger conclusion of Proposition~\ref{prop:local} (see also Remark \ref{impo}) namely
$$ \E_B\cap\L_B \subseteq \sum_{C\subsetneq B}\E_C. $$
In particular, if $|B|=k$, then
$$ \E_B\cap\L_B \subseteq \V_{\le k-1}. $$
Hence, if $g_B\in\E_B\cap\L_B$ for every $k$-element subset $B$, then
$$ \sum_{|B|=k}g_B\in\V_{\le k-1}, $$
and therefore $(g_B)_B\in\ker\Phi$. This proves
$$ \bigoplus_{\substack{B\subseteq[n]\\|B|=k}} (\E_B\cap\L_B) \subseteq \ker\Phi.$$

Combining the two inclusions gives
$$ \ker\Phi = \bigoplus_{\substack{B\subseteq[n]\\|B|=k}} (\E_B\cap\L_B). $$
from which the claim follows.

It remains only to use the local dimension calculation. For $|B|=2$,
$$ \dim \E_B/(\E_B\cap\L_B)=1.$$
For $|B|=3$,
$$ \dim \E_B/(\E_B\cap\L_B)=2. $$
Finally, for $|B|=k\ge4$,
$$ \dim \E_B/(\E_B\cap\L_B)=k(k-2). $$
Since there are $\binom{n}{k}$ subsets of $[n]$ of cardinality $k$, we obtain
$$ \dim\V_{\le k}-\dim\V_{\le k-1}=
\begin{cases}
\binom{n}{2}, & k=2,\\[2mm]
2\binom{n}{3}, & k=3,\\[2mm]
k(k-2)\binom{n}{k}, & k\ge4.
\end{cases} $$
This completes the proof.
\end{proof}

We can now finish the proof of the dimension formula.

\begin{proof}[Proof of Theorem~\ref{thm:main}]
A telescoping sum applied with Lemma \ref{prop:bw-filtration} gives
$$ \dim V_{\le n} = 1+\binom{n}{2}+2\binom{n}{3} + \sum_{k=4}^n k(k-2)\binom{n}{k}. $$

The dimension of $V_n$ is the rank of the matrix whose columns are the deterministic best--worst vectors $v_\pi$. Imposing the one normalizing condition on probability vectors gives 
 $$ \dim \BW_n = \dim V_n-1=  n(n-3)2^{n-2}+n+\binom{n}{2}-\binom{n}{3}. $$
\end{proof}

\end{document}